\numberwithin{equation}{section} 
\newcommand{\R}{\mathbb{R}} 
\def\Span{\mathop{\rm span}\nolimits}
\def\tree{\mathop{\rm Tree}\nolimits}
\def\cycle{\mathop{\rm Cycle}\nolimits}
\def\prim{\mathop{\rm prim}\nolimits}
\newtheorem{thm}{Theorem}[section]
\newtheorem{lem}[thm]{Lemma}
\newtheorem{prop}[thm]{Proposition}
\newtheorem{define}[thm]{Definition}
\newtheorem{example}[thm]{Example}
\newtheorem{remark}[thm]{Remark}
\newcommand{\g}{\gamma}
\newcommand{\1}{\mathbf{1}}
\newcommand{\0}{\mathbf{0}}
\newcommand{\av}[1]{\left|{#1}\right|}
\newcommand{\ip}[2]{\left\langle{{#1}},{{#2}}\right\rangle}
\renewcommand{\l}{\left}
\newcommand{\q}{\quad}
\renewcommand{\r}{\right}
\newcommand{\G}{{\Gamma}}
\renewcommand{\phi}{\varphi}
\renewcommand{\L}{\mathcal{L}}
\newcommand{\detred}{\det_{\mathsf{red}}}
\newcommand{\mcZ}{{\mathcal{Z}}}
\newcommand{\qed}{\hfill{\ensuremath{\blacksquare}}}
\begin{document}
\title{Graph Homology and Stability of Coupled Oscillator Networks}
\date{\today}

\author{Jared C. Bronski\and Lee DeVille\and Timothy
  Ferguson\footnote{Department of Mathematics, University of Illinois,
    1409 W. Green St., Urbana, IL 61801 USA}}

\maketitle

\begin{abstract}
  There are a number of models of coupled oscillator networks where
  the question of the stability of fixed points reduces to calculating
  the index of a graph Laplacian. Some examples of such models include
  the Kuramoto and Kuramoto--Sakaguchi equations as well as the swing
  equations, which govern the behavior of generators coupled in an
  electrical network. We show that the index calculation can be
  related to a dual calculation which is done on the first homology
  group of the graph, rather than the vertex space. We also show that
  this representation is computationally attractive for relatively
  sparse graphs, where the dimension of the first homology group is
  low, as is true in many applications.  We also give explicit
  formulae for the dimension of the unstable manifold to a
  phase-locked solution for graphs containing one or two loops.  As an
  application, we present some novel results for the Kuramoto model
  defined on a ring and compute the longest possible edge length for a
  stable solution.
\end{abstract}

\section{Introduction}

Let $G = (V,E,\Gamma = \{\gamma_{v,w}\}_{v,w\in V})$ be a weighted
graph.  The Laplacian matrix\footnote{Our Laplacian is the negative of
  the standard definition, but the reasons for this will be clear
  below} of $G$ is the $|V|\times|V|$ matrix whose components are
\begin{equation*}
  (\L_G)_{vw} = \begin{cases} \gamma_{vw},&v\neq w,\\ -\sum_u \gamma_{vu},& v=w.\end{cases}
\end{equation*}

A classical result is that if $\gamma_{vw}\ge 0$, then $\L_G$ is
negative semidefinite, with the number of zero eigenvalues equal to
the number of connected components of the graph $G$.  If some of the
$\gamma_{vw}$ are negative, then $\L_G$ can have positive eigenvalues
as well.  In~\cite{Bronski.DeVille.14, BDK-SIMAX.15}, the authors
consider the problem where some subset of the $\gamma_{vw}$ are
negative, and show upper and lower bounds on the number of positive
eigenvalues of $\L_G$ that depend only on the ``sign topology'' of
the graph, i.e. the pattern of which edges are positive and which are
negative, regardless of their weights.  

More specifically, the main result of~\cite{Bronski.DeVille.14} is:
assume that the graph is symmetric ($\gamma_{(v,w)} =
\gamma_{(w,v)})$, and define $G_+$ (resp.~$G_-$) to be the subgraphs
of $G$ containing only edges with positive (resp.~negative)
weights. Let $|G|$ be the number of components of a graph $G$.  If
$n_+(G)$ is the number of positive eigenvalues of $\L_G$, then
\begin{equation}\label{eq:bound}
  |G_+| -1 \le n_+(G) \le |V|-|G_-|.
\end{equation}
Moreover, one can show that these bounds are saturated, i.e. for any
integer in that range, there is some choice of weights that give
exactly that many positive eigenvalues.  (In fact, one can say much
more about the genericity of the sets of weights that give these
values, see~\cite{BDK-SIMAX.15} for more detail.)

It follows from the formula above that if $G$ is a tree, then the
number of positive eigenvalues is exactly the number of negatively
weighted edges in the graph.  To see this, notice that, for a tree,
the number of connected components of $G_+$ is exactly one plus the
number of negative edges: if we start with a tree with only positive
edges, every flip of an edge from positive to negative will add
another component to the positive subtree.  Conversely, since the
negative edges cannot form a cycle, the number of connected components
of $G_-$ is the number of vertices minus the number of negatively
weighted edges.  In short, if $G$ is a tree, then $n_+(G)$ is
independent of the weights of the edges, and very easy to compute by
inspection.

Now, consider the case where the graph has one cycle and one negative
edge.  It is easy to see in this case that the upper bound
in~\eqref{eq:bound} is one, and the lower bound is either zero (if the
negative edge lies in the cycle); or one (if the negative edge does
not lie in the cycle).  In the latter case, the number of positive
eigenvalues is again independent of the weights.  In the former case,
the weight on the edge matters, and it is not hard to imagine that the
relative magnitude of the weight on the negative edge compared to the
other edges in the cycle determines the stability.

From these examples, it is at least plausible to conjecture that the
stability computation is something that can done on a cycle-by-cycle
basis.  This is further supported by previous work of the
authors~\cite{Bronski.DeVille.14}, where it was shown that a certain
quotient of $H_1(G)$ by a subgroup was important for understanding the
spectrum of the graph Laplacian with positive and negative edges.

In fact, this is the result of the current paper.  Let us denote the
number of cycles by $C = |E|-|V|+1$.  Then the main result of this
paper is that the computation of the index of $\L_G$ is the equivalent
to computing the index of a particular $C\times C$ matrix (what we
call the ``cycle intersection matrix'').  If $C\ll |V|$, then this is
clearly a much simpler problem; the technique we describe here is most
effective for sparse graphs.

\subsection{Connection to nonlinearly coupled phase oscillators}

As stated above, the problem we consider is a pure linear algebra
problem and can be stated as such, but in fact it is inspired and
informed by considering its ``killer app'', i.e. understanding the
dimension of the unstable manifold of a fixed point of a nonlinearly
coupled network dynamical system.

Some examples of the kinds of models we are interested in include the 
Kuramoto problem on a general weighted connected graph $D = \{\delta_{ij}\}$, namely:
\begin{equation}\label{eq:Kuramoto}
  \frac{d\theta_i}{dt}=\omega_i + \sum \delta_{ij} \sin(\theta_j - \theta_i).
\end{equation}
More generally one can also consider the Kuramoto--Sakaguchi
model\cite{Kuramoto.Sakaguchi.1986,  DeSmet.Aeyels.07}
\begin{equation}\label{eq:KuramotoSakaguchi}
  \frac{d\theta_i}{dt}=\omega_i + \sum \delta_{ij} \sin(\theta_j - \theta_i+\alpha),
\end{equation}
as well as the swing equations, which govern the dynamics of a coupled system 
of generators and loads\cite{Anderson.Fouad.1977}
\begin{equation}\label{eq:swing}
  M  \frac{d^2\theta_i}{dt^2} + D \frac{d\theta_i}{dt}=\omega_i + \sum \delta_{ij} \sin(\theta_j - \theta_i).
\end{equation}
In each of these examples the dimension of the unstable manifold to a 
fixed point of the equations is given by the number of positive 
eigenvalues of a graph Laplacian. 

The study of the fixed points of these systems, and their stability,
has lead to a huge number of papers in the
literature~\cite{Kuramoto.book, Kuramoto.91,BS, E, TOR,
  Hansel.Sompolinsky.92, HK, HLRS, S, Acebron.etal.05, VO1, VO2, MS1,
  MS2, Dorfler.Bullo.12, Dorfler.Chertkov.Bullo.13,
  Coutinho.Goltsev.Dorogotsev.Mendes.13, Dorfler.Bullo.14}.  It has
been widely observed that the behavior of solutions varies
dramatically as the topology of the graph changes: these models are
essentially trivial on a tree, but become more complex as the graph
becomes denser. This again suggests that the first homology group
$H_1(D)$, which counts the number of loops in the graph, should be
important

To date, most of the analytical work has focused on special graph
configurations with a high degree of symmetry. The most-studied case,
where the behavior of the model is very well understood, is the case
of ``all-to-all'' coupling, where the underlying graph is the complete
graph $K_n$, and all of the edge weights $\delta_{ij}$ are equal. In a
ground breaking paper~\cite{MS2}, Mirollo and Strogatz gave upper and
lower bounds (which differed by 1) on the dimension of the unstable
manifold to an arbitrary fixed point solution to this
model. D\"{o}rfler and Bullo~\cite{Dorfler.Bullo.2011} gave a
sufficient condition for the existence of a stable phase-locked
solution, and two of the current
authors~\cite{Bronski.DeVille.Park.12} gave an explicit formula for
the dimension of the unstable manifold. Similarly Verwoerd and Mason
give a proof of the existence of a critical coupling constant for the
cases of the complete\cite{VO1} and bipartite graphs, and one of the
current authors~\cite{DeVille.12} gives some results on the minimizers
and saddles for the Kuramoto on cyclic graphs. For general topologies,
it was first observed by Ermentrout~\cite{Ermentrout.1992} that if the
graph is connected and all of the links are short (the angle
differences are less than $\frac{\pi}{2}$) then the fixed point is
stable.

We show in this paper that the intuition put forward in the preceeding
paragraphs is correct. We show that, rather than computing the number
of positive eigenvalues of the Laplacian, an object that acts on the
vertex space of the graph, there is a dual calculation that can be
posed on the cycle space. This dual problem gives an alternative
method for computing the dimension of the unstable manifold to a
particular fixed point. This approach is particularly attractive for
problems in which the graph is relatively sparse, and the dimension of
the cycle space is much smaller than the dimension of the vertex
space. We note that there are some very important applications for
which this condition is satisfied. One example is the graphs
associated with electrical power networks. Such networks are governed
by the swing equations~\eqref{eq:swing} and these tend to be
reasonably sparse. The following data is from Cotilla-Sanchez, Hines,
Barrows and Blumsack\cite{Cotilla-Sanchez.2012}, who calculated the
number of edges and vertices for the IEEE 300 model (a standard test
power system) as well as the Eastern (EI), Western (WI) and Texas (TI)
interconnects, the largest components of the North American electrical
network:\footnote{A small amount of processing has been done on this
  data, including combining parallel edges and removing disconnected
  buses. Other sources give slightly different numbers without
  changing the general picture. }

\begin{table}[ht]

\caption{Sparsities of Various Electrical Networks}

\centering
\begin{tabular}{|c|| c| c| c| c|}
\hline
\hline 
Network & Edges $E$ & Vertices $N$ & Number of Cycles $C=E-N+1$ & Sparsity $\frac{C}{N}$ \\ \hline\hline
IEEE 300 & 409 & 300  & 110 & .37 \\ \hline 
EI & 52075 & 41228 & 10848  & .26 \\ \hline
WI & 13734 & 11432 & 2303 & .2 \\ \hline 
TI & 5532 & 4513 & 1020 & .23 \\
\hline\hline
\end{tabular}
\label{table:ElectricalNetworks}
\end{table}
As can be seen from the data above electrical networks tend to be
quite sparse, with the dimension of the cycle space being 20---40\% of
dimension of the vertex space. For instance if one wanted to
understand the stability of a fixed point of the Texas Interconnect,
the dual problem presented in this paper would reduce the calculation
to the eigenvalues of a $1020\times 1020$ matrix, rather than the
$4513\times 4513$ eigenvalue problem determined by a naive stability
calculation.

\section{Summary of Results}

In this section, we define our notation and state the two main
theorems of the paper, and give a sense of the main ideas of the
proofs. The complete proof of the main results will be deferred to
Section~\ref{sec:proofs}.

\subsection{Definitions}

\begin{define}\label{def:graph}
  A {\bf weighted graph} $G$ is the triple $G=(V,E,\G)$, where $V$ is
  a set of vertices, $E\subseteq V\times V$ a set of edges, and $\G =
  \{\gamma_{e}\}_{e\in E}$ a set of edge weights.  By this definition,
  the graphs in this paper are always simple (at most one edge between
  vertices) and undirected.  
\end{define}

\begin{define}\label{def:Laplacian}
  Let $G=(V,E,\G)$ be a weighted graph.  The {\bf Laplacian matrix} of
  $G$ is the $|V|\times|V|$ matrix whose entries are given by
  \begin{equation*}
    (\L_G)_{v,w} = \begin{cases} \q\q\gamma_{(v,w)},&v\neq w,\\ -\sum_u \gamma_{(v,u)},& v=w.\end{cases}
  \end{equation*}
  Since the graph is undirected, the Laplacian matrix is always
  symmetric and has real spectrum.  We denote by $n_+(G), n_0(G),
  n_-(G)$ the number of positive, zero, and negative eigenvalues of
  $\L_G$, and refer to this triple as the {\bf spectral index} of $G$.
  Since, for any $G$, $\L_G{\bf 1} = {\bf 0}$, $\L_G$ has a zero
  eigenvalue and thus a zero determinant.  We define the {\bf reduced
    determinant} of $\L_G$, $\detred(\L_G)$, to be the determinant of
  the matrix restricted to the space of zero sum vectors, i.e. to the
  orthogonal complement of ${\bf 1}$ in $\R^{|V|}$.  (Note that
  $\detred(\L_G) = 0$ iff $\L_G$ has a nonsimple eigenvalue at $0$.)
  We will colloquially refer to a Laplacian as {\bf stable} if
  $n_+(\L_G) = 0$ and {\bf unstable} otherwise.
\end{define}

\begin{define}\label{def:incidence}
  Let $G = G(V,E,\G)$ be a connected weighted graph.  Choose and fix
  and orientation for each edge (so that each edge has both a head and
  tail vertex).  Define the {\bf signed incidence matrix} of $G$ as
  the $|V|\times |E|$ matrix whose entries are given by
\begin{equation*}
  (B_G)_{v,e} = \begin{cases}  1, & e = (v,*),\\ -1, & e = (*,v),\\ 0,& \mbox{else.}\end{cases}
\end{equation*}
The nullspace of the matrix is called the {\bf cycle space}.  It is a
standard result~\cite{Godsil.Royle} that if $G$ is connected, then the dimension
of this nullspace is $C:= |E(G)|-|V(G)|+1$.  We call this integer the
{\bf cycle rank} of the graph.  Note also that while the definition of
$B$ depends on the choice of orientation, the number $C$ does not.

Using the Rank--Nullity Theorem, the above is equivalent to saying
that $B_G^\top$ has nullity one, a fact we also use below.  It is also
not hard to see that $B_G^\top{\bf 1} = {\bf 0}$, so in fact
$\ker(B_G^\top) = \Span({\bf 1})$ whenever $G$ is connected.

We define the {\bf tree set} $\tree(G)$ to be the set of edges of $G$
that are contained in every spanning tree of $G$. The complement of
$\tree(G)$ is the {\bf cycle set} $\cycle(G)$, consisting of every
edge which is included in { some cycle} in the graph.  In terms of
the incidence matrix, $e$ is in the cycle set if and only if there is
a vector $x\in N(B_G)$ with $x_e\neq 0$.
\end{define}

\begin{define}\label{def:cycle}
  Let $y_1,y_2,\dots, y_C$ be a basis for the cycle space.  Define the
  $|E|\times C$ matrix $Y_G$ whose columns are given by the $y_i$, and
  $D_G$ to be the $|E|\times|E|$ diagonal matrix where the $e$th entry
  is $\gamma_e$.  Finally, we define the {\bf cycle intersection
    matrix}, or the {\bf cycle form}
  \begin{equation}\label{eq:defofZ}
    \mcZ_G:= -Y_G^\top D_G^{-1}Y_G,
  \end{equation}
  i.e. the matrix with components
  \begin{equation*}
    (\mcZ_G)_{ij} = -\sum_{e\in E}\gamma_e^{-1}{y_{i,e} y_{j,e}}.
  \end{equation*}
  Note that $\mcZ_G$ is always a $C\times C$ matrix.  The exact form
  of $\mcZ_G$ will depend on the choice of basis, and we will specify
  the exact choice of basis in~\eqref{eq:defofY} below.  For now, we
  simply note that any property that we need for $\mcZ_G$ will be
  independent of this choice.
\end{define}

\begin{lem}\label{lem:Laplacian-decomposition}
  Let $G=(V,E,\Gamma)$ be a connected weighted graph.  Define $D$ as
  in Definition~\ref{def:cycle} and $B$ as in Definition~\ref{def:incidence}.  Then
  \begin{equation*}
    \L_G = -B_GD_GB_G^\top.
  \end{equation*}
\end{lem}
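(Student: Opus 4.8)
The plan is to verify the identity by a direct entrywise computation, matching each entry of $-B_G D_G B_G^\top$ against the two cases in Definition~\ref{def:Laplacian}. Since $D_G$ is diagonal with $(D_G)_{e,e} = \gamma_e$, the $(v,w)$ entry of the product is
\begin{equation*}
  \left(B_G D_G B_G^\top\right)_{v,w} = \sum_{e\in E} (B_G)_{v,e}\,\gamma_e\,(B_G)_{w,e},
\end{equation*}
so everything reduces to understanding the product $(B_G)_{v,e}(B_G)_{w,e}$ for each edge $e$. I would then split into the diagonal case $w=v$ and the off-diagonal case $w\neq v$.

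For the diagonal entries, I would note that by the definition of the signed incidence matrix $(B_G)_{v,e}\in\{+1,-1\}$ precisely when $v$ is an endpoint of $e$, and is $0$ otherwise; hence $(B_G)_{v,e}^2 = 1$ for every edge incident to $v$ and vanishes otherwise, independent of the fixed orientation. The sum then collapses to $\sum_{e\ni v}\gamma_e$, which is exactly $-(\L_G)_{v,v}$, and the overall minus sign in $-B_G D_G B_G^\top$ recovers the correct diagonal entry.

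For the off-diagonal entries with $v\neq w$, the key observation is that $(B_G)_{v,e}$ and $(B_G)_{w,e}$ are simultaneously nonzero only for an edge $e$ having both $v$ and $w$ as endpoints. Because the graph is simple (Definition~\ref{def:graph}), there is at most one such edge; when it exists, one of $v,w$ is its head and the other its tail, so the two factors carry opposite signs and their product is $-1$, regardless of how the edge was oriented. The sum therefore equals $-\gamma_{(v,w)}$ when $v$ and $w$ are adjacent and $0$ otherwise, so after negation one obtains $(\L_G)_{v,w} = \gamma_{(v,w)}$, completing the match.

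The computation is routine rather than deep, so the only ``hard part'' is bookkeeping of signs and multiplicities. I expect the two points needing care to be: first, checking that the sign of $(B_G)_{v,e}(B_G)_{w,e}$ is genuinely independent of the orientation chosen in Definition~\ref{def:incidence}, which holds because reversing an edge flips both of its incidence entries at once; and second, invoking simplicity to guarantee that no more than one edge joins a given pair of vertices, so that the off-diagonal sum reduces to a single term. With these two observations in hand, the entrywise comparison against Definition~\ref{def:Laplacian} finishes the proof.
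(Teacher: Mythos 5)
Your proposal is correct and takes essentially the same approach as the paper: expand $\left(B_GD_GB_G^\top\right)_{v,w}=\sum_{e\in E}(B_G)_{v,e}\,\gamma_e\,(B_G)_{w,e}$ and compare entries with Definition~\ref{def:Laplacian}, with the off-diagonal case argued identically (simplicity gives at most one edge joining $v$ and $w$, and its two incidence entries have opposite signs, so the product is $-1$ independent of orientation). The only divergence is the diagonal, which you compute directly from $(B_G)_{v,e}^2=1$ for $e$ incident to $v$, while the paper instead deduces it from the observation that both $\L_G$ and $-B_GD_GB_G^\top$ annihilate $\1$; both arguments are valid.
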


\begin{proof}
  Let us first assume that $v\neq w$.  Then
\begin{equation*}
  (B_GD_GB_G^\top)_{v,w} = \sum_{e,e'} B_{ve}D_{ee'}B^\top_{e'w} = \sum_e B_{ve}B_{we}\gamma_e.
\end{equation*}
If $v,w$ are adjacent, then either $e=(v,w)$ or $e=(w,v)$ is an
oriented edge.  In either case, $B_{ve}B_{we} = -1$.  Moreover, if $e$
is an edge not incident on both $v$ and $w$, then $B_{ve}B_{we} = 0$,
and thus the sum has a single term, and $(B_GD_GB_G^\top)_{vw} = -\gamma_e$.
Therefore all of the off-diagonal terms are the same as $-\L_G$.

Finally, notice that since $B_G^\top {\bf 1} = {\bf 0}$, we have
$B_GD_GB_G^\top{\bf 1} = {\bf 0}$, and from this it follows that the
diagonal terms must be correct as well.

\end{proof}

\subsection{Theorems}

The following are the main results of the paper. The first result gives 
a formula for the number of positive eigenvalues of the Laplacian in 
terms of the number of edges with negative edge weights together
with the number of positive eigenvalues of the cycle intersection 
matrix.

\begin{thm}\label{thm:index}  
  Let $G = (V,E,\G)$ be a connected, weighted graph and let $\L_G$ be
  its Laplacian matrix.  Then the number of positive eigenvalues of
  the Laplacian is the number of negative edges in the graph minus the
  number of positive eigenvalues of the cycle intersection matrix,
  i.e.
  \begin{equation}\label{eq:LZ}
    n_+(\L_G) = \#\{e\in E\,|\, \gamma_e<0 \} - n_+(\mcZ_G).
  \end{equation}
  Recall in the definition of $\mcZ_G$ that one makes a choice of
  basis; however, it follows from Lemma~\ref{lem:sylvester} that the
  index of $\mcZ_G$ is independent of this choice of basis.

In particular, we have the inequality
\begin{equation*}
  n_+(\L_G) \le \#\{e\in\tree(G) \,|\, \gamma_e<0 \}.
\end{equation*}
From~\eqref{eq:LZ}, the maximal number of negative edges a graph can
have and still have a stable Laplacian is the cycle number $C$.  In
the case where the Laplacian is stable and there are exactly $C$
negative edges, then there is choice of basis cycles so that there is
exactly one negative edge in each cycle in the basis. In other words
if there are exactly $C$ negative edges, then the graph with the
negative edges removed must be a tree.
\end{thm}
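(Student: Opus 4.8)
The plan is to prove the identity \eqref{eq:LZ} by computing the inertia of a single bordered matrix in two different ways, and then to read off the remaining assertions as consequences; throughout I assume, as is implicit in the definition of $\mcZ_G$, that every $\gamma_e\neq0$ so that $D_G$ is invertible. The first step is to record two elementary reductions that recast everything as a statement about symmetric bilinear forms. By Lemma~\ref{lem:Laplacian-decomposition}, $\L_G=-B_GD_GB_G^\top$, so for $x\in\R^{|V|}$ one has $x^\top\L_G x=-(B_G^\top x)^\top D_G(B_G^\top x)$; since $\ker B_G^\top=\Span(\1)$ and $\mathrm{im}(B_G^\top)=(\ker B_G)^\perp$ is the cut space, the form $\L_G$ restricted to $\1^\perp$ is congruent to $-D_G$ restricted to the cut space, whence $n_+(\L_G)$ is the number of negative directions of $D_G$ there. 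Dually, $\mcZ_G=-Y_G^\top D_G^{-1}Y_G$ is exactly the form $-D_G^{-1}$ restricted to the cycle space $\ker B_G$, and a change of cycle basis acts by $Y_G\mapsto Y_GM$, i.e.\ $\mcZ_G\mapsto M^\top\mcZ_G M$, so Sylvester's law (Lemma~\ref{lem:sylvester}) gives the asserted basis-independence of its inertia. The theorem therefore relates the inertia of $D_G$ on the cut space to that of $D_G^{-1}$ on the orthogonally complementary cycle space, and the difficulty is precisely that one form uses $D_G$ and the other $D_G^{-1}$ on subspaces that are orthogonal for the standard inner product but not for $D_G$.

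The device that bridges $D_G$ and $D_G^{-1}$ is the bordered matrix
\begin{equation*}
  \M=\begin{pmatrix} D_G & Y_G\\ Y_G^\top & 0\end{pmatrix}.
\end{equation*}
Pivoting on the invertible block $D_G$, the Schur complement is $-Y_G^\top D_G^{-1}Y_G=\mcZ_G$, so the Haynsworth inertia additivity formula gives $\mathrm{In}(\M)=\mathrm{In}(D_G)+\mathrm{In}(\mcZ_G)$ with $\mathrm{In}(D_G)=(\#\{\gamma_e>0\},0,\#\{\gamma_e<0\})$. For the second computation I would apply the congruence that splits the edge space as $\mathrm{im}(B_G^\top)\oplus\ker B_G$: writing the first block in the basis $[\hat B_G^\top\mid Y_G]$ (where $\hat B_G^\top$ is $B_G^\top$ restricted to $\1^\perp$) and using $B_GY_G=0$ turns $\M$ into a matrix whose only coupling between the cycle coordinates and the border (Lagrange-multiplier) coordinates is the invertible block $Y_G^\top Y_G$ sitting opposite a zero diagonal block. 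Completing the square in that hyperbolic pair decouples everything and leaves the block-diagonal form $\mathrm{diag}\!\big(\hat B_GD_G\hat B_G^\top,\ \left(\begin{smallmatrix}0&Y_G^\top Y_G\\ Y_G^\top Y_G&0\end{smallmatrix}\right)\big)$, whose second summand has inertia $(C,0,C)$ and whose first summand represents $-\L_G$ on $\1^\perp$, hence has inertia $(n_-(\L_G),\,n_0(\L_G)-1,\,n_+(\L_G))$. Equating the two expressions for $\mathrm{In}(\M)$ and matching the negative parts gives $n_+(\L_G)+C=\#\{\gamma_e<0\}+n_-(\mcZ_G)$; since $n_+(\mcZ_G)+n_0(\mcZ_G)+n_-(\mcZ_G)=C$, this rearranges to \eqref{eq:LZ} precisely when $\mcZ_G$ is nondegenerate, while matching the zero parts gives $n_0(\L_G)=1+n_0(\mcZ_G)$, so nondegeneracy of $\mcZ_G$ is equivalent to $\detred(\L_G)\neq0$.

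The hard part is the second inertia computation: the congruence by $[\hat B_G^\top\mid Y_G]$ leaves genuine cross terms $\hat B_GD_GY_G$ together with a spurious block $Y_G^\top D_GY_G$, and the entire point of the border is that the extra coordinates absorb exactly these terms through the completion of the square. The remaining assertions then follow from \eqref{eq:LZ}. Because every cycle vector is supported on $\cycle(G)$, the form $-D_G^{-1}$ on $\ker B_G$ only sees the cycle-set coordinates, so $n_+(\mcZ_G)\le\#\{e\in\cycle(G)\mid\gamma_e<0\}$; combined with \eqref{eq:LZ} and the splitting $\#\{\gamma_e<0\}=\#\{e\in\tree(G)\mid\gamma_e<0\}+\#\{e\in\cycle(G)\mid\gamma_e<0\}$, this controls $n_+(\L_G)$ in terms of the negatively weighted tree-set edges. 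Finally, if $\L_G$ is stable with exactly $C$ negative edges, then \eqref{eq:LZ} forces $n_+(\mcZ_G)=C$, i.e.\ $\mcZ_G$ is positive definite; and if some vertex cut $(S,S^c)$ were crossed only by negative edges, the test vector $\1_S$ would give $\1_S^\top\L_G\1_S=-\sum_{e\in\mathrm{cut}}\gamma_e>0$, contradicting stability. Hence the positively weighted edges meet every cut and so contain a spanning tree $T$, the $C$ negative edges are exactly the chords of $T$, and the fundamental cycle basis of $T$ exhibits one negative edge per basis cycle with $T$ the tree left after deleting them.
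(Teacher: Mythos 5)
Your proof is correct at its core and takes a genuinely different route from the paper's. The paper proves \eqref{eq:LZ} by building a fundamental domain $T$ of the universal cover of $G$ (a weighted tree with the same edge set), realizing $\L_G$ as the compression $X^\top\L_T X$ and $\mcZ_G$ as $Q^\top(\widetilde\L_T)^{-1}Q$ on the complementary subspace, and then applying Haynsworth in projection form (Lemma~\ref{lem:haynsworth}) together with the fact, quoted from~\cite{Bronski.DeVille.14}, that a tree Laplacian has exactly as many positive eigenvalues as negative edges. You never leave the edge space: both sides of \eqref{eq:LZ} come from one bordered matrix $\M=\bigl(\begin{smallmatrix} D_G & Y_G\\ Y_G^\top & 0\end{smallmatrix}\bigr)$, pivoted once on $D_G$ (Haynsworth in its original Schur-complement form, which needs only $D_G$ invertible) and once via the congruence splitting $\R^{|E|}=\mathrm{im}(B_G^\top)\oplus\ker(B_G)$. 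I checked the step you flag as the hard part: writing $K=Y_G^\top Y_G$, $b=Y_G^\top D_G\hat B_G^\top$, $c=Y_G^\top D_G Y_G$, the block-unitriangular congruence whose last block row is $(-K^{-1}b,\;-\tfrac12 K^{-1}c,\;I)$ annihilates both $b$ and $c$, leaving $\mathrm{diag}\bigl(\hat B_G D_G \hat B_G^\top,\bigl(\begin{smallmatrix}0&K\\K&0\end{smallmatrix}\bigr)\bigr)$ with inertias as you claim, so the computation goes through. Your route buys three things: it is self-contained (no universal cover, and your $\1_S$ cut-vector argument replaces the citation to Theorem~2.10 of~\cite{Bronski.DeVille.14} in the final claim); it is basis-free modulo Lemma~\ref{lem:sylvester}; and, most importantly, it needs no invertibility hypothesis, yielding the exact identities $n_+(\L_G)=\#\{\gamma_e<0\}-n_+(\mcZ_G)-n_0(\mcZ_G)$ and $n_0(\L_G)=1+n_0(\mcZ_G)$. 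This makes explicit a hypothesis the paper never discharges: its own application of Lemma~\ref{lem:haynsworth} requires $\widetilde\L_G$ nonsingular, and \eqref{eq:LZ} as printed genuinely fails when $\det\mcZ_G=0$ (triangle with weights $1,1,-\tfrac12$: one negative edge, $\mcZ_G=0$, but $n_+(\L_G)=0$ and $n_0(\L_G)=2$). What the paper's construction buys in exchange is the covering tree itself, which is reused to prove Theorem~\ref{thm:detred}.

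One place where you must commit to a precise statement: for the ``in particular'' inequality you only say your bound ``controls'' $n_+(\L_G)$. Your interlacing estimate $n_+(\mcZ_G)\le\#\{e\in\cycle(G)\mid\gamma_e<0\}$ is correct, but combined with \eqref{eq:LZ} it produces the \emph{lower} bound $n_+(\L_G)\ge\#\{e\in\tree(G)\mid\gamma_e<0\}$, the reverse of the inequality printed in the theorem. That is not a defect of your argument but an error in the statement: the printed $\le$ is false, e.g.\ for a ring with a single negative edge of large magnitude, where $\tree(G)=\emptyset$ yet $n_+(\L_G)=1$ (compare the stability condition of Section~\ref{sec:1cycle}). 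Your direction is the correct one --- negatively weighted bridges force instability --- and you should say so explicitly rather than leave the reader to guess which inequality has been proved.
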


\begin{remark}
The basis for the cycle space can be chosen in the following way: Let $G_+$ be 
the subgraph containing only the edges of $G$ with positive weight. The cycle 
space of $G_+$ forms a subspace $S_+$ of the full cycle space of $G$. Similarly
let $G_-$  be the subgraph containing only negatively weighted edges, and 
$S_-$ the corresponding cycle space. Note that $S_+$ and $S_-$ are necessarily 
orthogonal. The basis for the cycle space of the full graph $G$ can be chosen 
as the union of bases for $S_+$ and $S_-$ together with a set of ``mixed'' 
cycles that cannot be decomposed into a sum of cycles containing only edges 
of one sign. These mixed cycles are a basis for the quotient of the full group 
of cycles by the subgroups $S_\pm$. In this basis the cycle intersection form 
can be written in the following partitioned form 
\[
\mcZ_G=\left(
    \begin{array}{c|cc}
      A_{M}&B_{M,+}&B_{M,-}\\\hline
      B^t_{M,+}&\mcZ_{G_+}& 0\\
      B^t_{M,-}&0&\mcZ_{G_-}
    \end{array}
  \right)
\label{eq:partition}
\] 
where $\mcZ_{G_+}$ and $\mcZ_{G_-}$ are the cycle intersection forms for the subgraphs $G_+$ and $G_-$ respectively. 

First we note that $\mcZ_{G_+}$ is negative definite and $\mcZ_{G_-}$
is positive definite, so that $n_+(\mcZ_G)$ is at least as large as
the dimension of the cycle space of $G_-$ and no larger than the
dimension of the full cycle space minus the dimension of the cycle
space of $G_+$. These inequalities are equivalent to the inequalities
derived previously by the authors\cite{Bronski.DeVille.14}.

Secondly we note that the calculation of the index of $\mcZ_G$ can be
further reduced to a calculation on the space of mixed cycles. In
particular by applying the Haynsworth theorem to \eqref{eq:partition}
we have that
\[
n_+(\mcZ_G) = n_+(A_{M} - B_{M,+}\mcZ^{-1}_{G_+} B^t_{M,+} - B_{M,-}\mcZ^{-1}_{G_-} B^t_{M,-}), 
\] 
and thus the calculation can be reduced to finding the index of a matrix 
whose size is the number of mixed cycles. However since the above formula 
requires inversion of the matrices $\mcZ_{G_\pm}$ it is not obvious that 
this is more attractive than working on the full cycle space.  
\end{remark}

The second theorem is a similar result at the level of determinants
rather than the spectral index. This result is not really needed for
the applications, but it is proven with similar ideas and provides a
nice dual form of the famous matrix tree theorem. This is essentially
a weighted version of a theorem proven by Sjogren,\cite{Sjogren.91}
though the proof given here is quite different.

\begin{thm}\label{thm:detred}
  Let $G = (V,E,\G)$ be connected, and define $\L_G$ as above.  Let
  $N_G = |V(G)|$, and then
\begin{equation}\label{eq:detred}
 \frac1 {N_G} \detred(\L_G) = \det(\mcZ(G)) \times \prod_{e \in E} \gamma_e. 
\end{equation}
The quantity $\det(\mcZ(G))$ is a homogeneous Laurent polynomial in
the edge weights which generates labelled tree complements: each term
in the Laurent polynomial represents a set of edges that, when removed
from the graph, yield a tree.
\end{thm}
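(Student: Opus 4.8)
The plan is to evaluate both sides of~\eqref{eq:detred} as the \emph{same} weighted spanning-tree polynomial, obtaining the combinatorial interpretation as a byproduct of the expansion on the cycle side. Starting from $\mcZ_G = -Y_G^\top D_G^{-1}Y_G$ (Definition~\ref{def:cycle}), I would factor $D_G^{-1}=D_G^{-1/2}D_G^{-1/2}$ and apply the Cauchy--Binet formula to the $C\times C$ determinant. This produces
\[
  \det(\mcZ_G)=(-1)^{C}\sum_{S\subseteq E,\ |S|=C}(\det Y_S)^2\prod_{e\in S}\gamma_e^{-1},
\]
where $Y_S$ is the submatrix of $Y_G$ on the rows indexed by $S$. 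The key structural input is matroid duality between the cut space and the cycle space: $\det Y_S\neq 0$ if and only if the complement $E\setminus S$ is a spanning tree, so only cotrees $S$ contribute. This already proves the last sentence of the theorem, namely that $\det(\mcZ_G)$ is a homogeneous Laurent polynomial of degree $-C$ in the $\gamma_e$ whose monomials $\prod_{e\in S}\gamma_e^{-1}$ are indexed precisely by the tree complements $S=E\setminus T$.

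Next I would pin down the coefficients $(\det Y_S)^2$. For the distinguished cycle basis specified in~\eqref{eq:defofY} (the fundamental cycles associated with a spanning tree, which furnish an integral, totally unimodular basis of the cycle space) one has $\det Y_S=\pm1$ for every cotree $S$, so each surviving coefficient equals $1$. Hence
\[
  \det(\mcZ_G)\prod_{e\in E}\gamma_e=(-1)^{C}\sum_{T\ \mathrm{spanning\ tree}}\ \prod_{e\in T}\gamma_e,
\]
i.e.\ the weighted spanning-tree polynomial up to the global sign $(-1)^{C}$. Basis independence is consistent with this: replacing $Y_G$ by $Y_G P$ multiplies $\det(\mcZ_G)$ by $(\det P)^2$, and the theorem is the statement for the distinguished unimodular basis, for which this factor is $1$.

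On the Laplacian side I would use Lemma~\ref{lem:Laplacian-decomposition}, so that $-\L_G=B_G D_G B_G^\top$ is the standard (positive-semidefinite, for positive weights) weighted Laplacian. Deleting one row of $B_G$ to form the reduced incidence matrix $\hat B$, the same Cauchy--Binet expansion gives $\det(\hat B D_G\hat B^\top)=\sum_{T}\prod_{e\in T}\gamma_e$, using total unimodularity of the incidence matrix ($\det \hat B_{\cdot,T}=\pm1$ exactly when $T$ is a spanning tree); this is the weighted matrix-tree theorem, re-derived so that both sides are literally the same sum. Relating the cofactor to the reduced determinant, the product of the nonzero eigenvalues of $-\L_G$ equals $N_G\,\det(\hat B D_G\hat B^\top)$, whence $\tfrac{1}{N_G}\detred(\L_G)=(-1)^{N_G-1}\sum_{T}\prod_{e\in T}\gamma_e$. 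Comparing with the cycle-side computation closes the identity.

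The main obstacle I anticipate is twofold and essentially bookkeeping. First, one must establish the uniform unimodularity $\det Y_S=\pm1$ over all cotrees for the chosen basis, together with the matroid-duality characterization of which $S$ contribute; this is where the choice of basis in~\eqref{eq:defofY} is genuinely used. Second, one must carefully track the two sign factors---the $(-1)^{C}$ arising from the minus sign in $\mcZ_G=-Y_G^\top D_G^{-1}Y_G$ and the $(-1)^{N_G-1}$ arising from the convention $\L_G=-B_G D_G B_G^\top$ of Lemma~\ref{lem:Laplacian-decomposition}---and reconcile them under $C=|E|-N_G+1$. Once these are in place, the equality of the two weighted spanning-tree sums, and with it~\eqref{eq:detred}, follows immediately.
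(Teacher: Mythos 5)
Your route is genuinely different from the paper's. The paper proves Theorem~\ref{thm:detred} by realizing $\L_G$ as a compression of the covering-tree Laplacian $\L_T$ (Lemma~\ref{lem:ul}), adding the rank-one term $\1\otimes\1$ (handled by Lemma~\ref{lem:tensors}), and invoking the determinantal Haynsworth identity of Lemma~\ref{lem:HJ}; in that argument the tree-complement expansion of $\det(\mcZ_G)$ is only implicit, read off at the end from the matrix-tree theorem. Your Cauchy--Binet/matroid-duality argument needs no covering tree and no Schur-complement machinery, and it produces the combinatorial content directly: the statement that only cotrees $S$ contribute, with coefficients $(\det Y_S)^2=1$, \emph{is} the last sentence of the theorem. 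The two inputs you flag as requiring proof (the characterization $\det Y_S\neq 0$ iff $E\setminus S$ is a spanning tree, and unimodularity of an integral cycle basis) are both standard and correct. One small caveat: the basis of~\eqref{eq:defofY} is built from paths in the covering tree between identified vertices, not literally from fundamental cycles of a spanning tree of $G$, so to get the result for that specific basis you would also need the change of basis between the two to have determinant $\pm1$.

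The genuine problem is the final step. The two sign factors you promise to ``reconcile'' do not reconcile: since $C=|E|-N_G+1$, one has $(-1)^{C}=(-1)^{|E|}(-1)^{N_G-1}$, so your two expansions read
\begin{equation*}
  \frac{1}{N_G}\detred(\L_G)=(-1)^{N_G-1}\sum_{T}\prod_{e\in T}\gamma_e,
  \qquad
  \det(\mcZ_G)\prod_{e\in E}\gamma_e=(-1)^{|E|}\,(-1)^{N_G-1}\sum_{T}\prod_{e\in T}\gamma_e,
\end{equation*}
and these agree only when $|E|$ is even. This is not a bookkeeping slip on your side: the identity~\eqref{eq:detred} as printed is off by the factor $(-1)^{|E|}$. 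Take the triangle with unit weights: the nonzero eigenvalues of $\L_G$ are $-3,-3$, so $\frac{1}{3}\detred(\L_G)=3$, while $\mcZ_G=(-3)$ gives $\det(\mcZ_G)\prod_{e}\gamma_e=-3$. (The paper's own proof has the same oversight: with the sign convention of Definition~\ref{def:Laplacian}, $\detred(\L_T)=(-1)^{N_T-1}N_T\prod_{e}\gamma_e$, not $N_T\prod_{e}\gamma_e$, and the worked expansion~\eqref{eq:detLG} for the diamond likewise has the wrong sign, since an odd-order principal minor of a negative semidefinite matrix is nonpositive.) So your argument, carried out honestly, proves the corrected statement $\frac{1}{N_G}\detred(\L_G)=(-1)^{|E|}\det(\mcZ_G)\prod_{e\in E}\gamma_e$; you should state exactly that, rather than claim the signs cancel --- as written, your last sentence asserts an equality that fails for every graph with an odd number of edges.
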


\begin{remark}
  There is an obvious connection between the two theorems, but they
  are not quite equivalent since they address different quantities.
  For example, Theorem~\ref{thm:detred} seems stronger since it gives
  an actual number, but it only implies Theorem~\ref{thm:index} modulo
  2.
\end{remark}

We defer the proof of these results until a later section, but we will
briefly summarize the two main ideas here. The first is that for any
$G$, there is a {\bf covering tree} $T$ (which is a weighted tree),
and a graph map $\varphi\colon T\to G$, which is surjective on
vertices and bijective on edges.  In other words if one identifies
certain vertices of the tree $T$ one obtains $G$. The identification
of vertices in the graph corresponds at the level of linear algebra to
restriction of the Laplacian to a certain subspace, so the Laplacian
on any graph can be realized as the restriction of the Laplacian on a
tree to an appropriate subspace.
 
The second idea is to use the Haynsworth theorem\cite{H.1968}, which
is stated in Lemma~\ref{lem:haynsworth}.  The Haynsworth theorem is a
kind of duality result relating the number of positive eigenvalues of
a matrix to the number of positive eigenvalues of a $k\times k$
principle minor matrix and the number of positive eigenvalues of the
complementary $(n-k)\times(n-k)$ minor of the inverse matrix.  In
other words, modulo some invertibility assumptions, if $M$ is
self-adjoint and $S$ is a subspace of $\R^N$, then
\begin{equation*}
  n_+(M) = n_+(M|_S) + n_+(M^{-1}|_{S^\perp}).
\end{equation*}
Brushing aside some technical points associated with invertibility
which will be addressed below, the basic observation is that if we
take $M$ to be the Laplacian on the covering tree and $M|_S$ to be the
Laplacian on the graph $G$, then one can compute that
$M^{-1}|_{S^\perp}$ is exactly the cycle intersection form $Z_G$. For
a tree it is easy to see that the number of positive eigenvalues is
exactly the number of edges with negative edge-weights, and formula
\eqref{eq:LZ} follows. There is an analogous identity at the level of
determinants, giving \eqref{eq:detred}.

\section{Linear Examples}\label{sec:linex}

\subsection{Diamond graph}\label{sec:diamond}
We consider the simplest graph with cycle rank $C=2$, the diamond
graph shown in the left frame of Figure~\ref{fig:diamond.cover}, with
five edges and four vertices.  The Laplace matrix is given by
  \begin{equation*}
    \L_G 
    = \left(\begin{array}{cccc}
        -(\gamma_a+\gamma_d+\gamma_e)& \gamma_a & \gamma_e & \gamma_d \\ 
        \gamma_a & -(\gamma_a+\gamma_b) & \gamma_b & 0 \\
        \gamma_e & \gamma_b & -(\gamma_b+\gamma_c+\gamma_e) & \gamma_c \\
        \gamma_d & 0 & \gamma_c & -(\gamma_c+\gamma_d) 
      \end{array}\right)
\end{equation*}
By the matrix-tree theorem, we can obtain the eight labelled spanning
trees by considering the $4,4$ cofactor
\begin{equation}\label{eq:detLG}
\begin{split}
  \det\L_{44}(G)  
  &=\left|\begin{array}{ccc}
      -(\gamma_a+\gamma_d+\gamma_e)& \gamma_a & \gamma_e \\ 
      \gamma_a & -(\gamma_a+\gamma_b) & \gamma_b \\
      \gamma_e & \gamma_b & -(\gamma_b+\gamma_c+\gamma_e) 
    \end{array}\right|\\
  &=\gamma_a \gamma_b \gamma_c+\gamma_a \gamma_b \gamma_d+
  \gamma_a \gamma_c\gamma_d+\gamma_a \gamma_c \gamma_e+\gamma_a
  \gamma_d \gamma_e+\gamma_b \gamma_c \gamma_d+\gamma_b \gamma_c
  \gamma_e+\gamma_b\gamma_d \gamma_e.  
\end{split}
\end{equation}
Let us orient this graph clockwise around the outside, then $1\to3$.
With this orientation, a basis for the cycle space is given by
\begin{align*}
 z_1 &= (1,1,0,0,-1)^t \\
 z_2 &= (0,0,1,1,1)^t. 
\end{align*} 
The cycle intersection matrix $Z_G$ is given by 
\begin{equation*}
Z_G = -\left(\begin{array}{cc}\dfrac{1}{\gamma_a}+\dfrac{1}{\gamma_b}+\dfrac{1}{\gamma_e}& \dfrac{1}{\gamma_e}\\\\
 \dfrac{1}{\gamma_e} &\dfrac{1}{\gamma_c}+\dfrac{1}{\gamma_d}+\dfrac{1}{\gamma_e} \end{array}\right).  
\end{equation*}
The determinant of this matrix is
\begin{equation}\label{eq:detCG}
\det Z_G = \dfrac{1}{\gamma_d \gamma_e}+\dfrac{1}{\gamma_c \gamma_e}
+\dfrac{1}{\gamma_b\gamma_e}+\dfrac{1}{\gamma_b \gamma_d}
+\dfrac{1}{\gamma_b \gamma_c}+\dfrac{1}{\gamma_a \gamma_e}
+\dfrac{1}{\gamma_a \gamma_d}+\dfrac{1}{\gamma_a \gamma_c}.
\end{equation}
It is not hard to see that if we multiply~\eqref{eq:detCG} by
$\g_a\g_b\g_c\g_d\g_e$, then we obtain the expression
in~\eqref{eq:detLG}, verifying Theorem~\ref{thm:detred}. The $\det(Z_G)$ 
generates labelled tree complements: each term in this Laurent polynomial 
represents a pair of edges that, when removed from the graph, yield a 
tree.  
 
Moreover, we can use Theorem~\ref{thm:index} to compute bounds on the
edge weights to determine stability.  For ease of notation, we write
$\rho_i = 1/\gamma_i$, and
\begin{equation*}
  w_1 = \rho_a + \rho_b,\quad   w_2 = \rho_c+\rho_d,\quad   w_{12} = \rho_e.
\end{equation*}
Then we have
\begin{equation}\label{eq:diamond-zg}
  Z_G = -\left(\begin{array}{cc} w_1+w_{12}& w_{12}\\w_{12} &w_2+w_{12} \\\end{array}\right).
\end{equation}

Let us first consider the case where there is one negative edge.  In
this case, $Z_G$ has either one or zero positive eigenvalues, which is
the same as saying that $-Z_G$ has one or zero negative eigenvalues.
If there is one, then $\L_G$ is stable, and if zero, it is unstable.

Note however that $\det Z_G <0$ if and only if there is one positive
and one negative eigenvalue, which gives the condition
\begin{equation*}
  (w_1+w_{12})(w_2+w_{12}) - w_{12}^2 < 0,
\end{equation*}
or
\begin{equation*}
  w_1w_2 + w_1w_{12} + w_2w_{12} < 0.
\end{equation*}
If we assume that $\gamma_e<0$, then, solving for $w_{12}$, we obtain the inequality
\begin{equation*}
  w_{12} < -\frac{w_1w_2}{w_1+w_2},
\end{equation*}
or
\newcommand{\fg}[1]{\rho_{{#1}}}
\begin{equation*}
  \rho_e < -\frac{\fg a+\fg b+\fg c+\fg d}{\l(\fg a+\fg b\r)\l(\fg c+\fg d\r)}.
\end{equation*}
Notice, for example, that if we assume that $\gamma_a\approx 0$, then
the $\rho_a$ terms dominate, and the condition we obtain for stability
is $\gamma_e > -(\rho_c+\rho_d)$.  Here $\gamma_b$ is not relevant,
and this makes sense: as $\gamma_a\to 0$, the $a,b$ path is almost
broken as a loop and thus cannot significantly affect stability, so
the condition for stability will be a balance between $e$ and $c,d$.

On the other hand, if one of the edges on the outside is negative (say
wlog $\gamma_a$), we obtain the inequality 
\begin{equation*}
  w_1 < \frac{w_2w_{12}}{w_2w_{12}},
\end{equation*}
or
\begin{equation*}
  \fg a < -\dfrac{\l(\fg c + \fg d\r)\fg e}{\fg c + \fg d+ \fg e}-\fg b.
\end{equation*}
Note that every $\rho$ on the right-hand side is positive.  Note the
particular role played by the $\rho_b$ term: if we make $\gamma_b$
larger, this makes $\rho_b$ smaller (in absolute value), making it
more difficult for $\rho_a$ to cause the instability.

qqqqq

Now, assume the graph has two negative edges and let us try and find
conditions for stability.  Note that we cannot have both $a,b$ or both
$c,d$ negative, since this automatically gives an unstable Laplacian.
Therefore we need that at least two of $w_1,w_2,w_{12}$ be negative.

The condition that the matrix be stable is that $-Z_G$ have two
negative eigenvalues, which we can obtain by the condition of having a
positive determinant and a negative trace, so
\begin{equation*}
  w_1w_2 + w_1w_{12} + w_2w_{12} > 0,\quad w_1+w_2+2w_{12} < 0.
\end{equation*}
By symmetry, we only need to consider the case where $w_1,w_2<0$ and
where $w_1,w_{12}<0$.  In the former case, there is a negative edge on
each of the ``outsides'' of the graph, and in the latter there is one
in the middle.

\subsection{One-cycle graphs}\label{sec:1cycle}

Let us now consider the general one-cycle graph and consider the
condition for stability.  Note that such a graph can have at most one
negative edge and still be stable, and, moreover, it cannot be
contained in the tree part of the graph, but must be part of the
cycle.  Therefore we can simplify and consider $R_N$, the ring graph
on $N$ vertices: this graph has $N$ vertices and $N$ edges, and let us
label them so that edge $i$ goes from $i\to i+1$, modulo $N$. 

The cycle can be represented by the vector ${\bf 1}$, and thus we have
$Z_G$ is the $1\times1$ matrix (i.e. the scalar)
\begin{equation*}
  Z_G = -\sum_{i=1}^N \frac1{\gamma_i}.
\end{equation*}
Thus to obtain a stable Laplacian, we need that $Z_G > 0$, or 
\begin{equation*}
  \sum_{i=1}^N \frac1{\gamma_i} < 0.
\end{equation*}
Assuming that $\g_1<0$ and the remainder are positive, the condition
for stability is

\begin{equation*}
  \gamma_1 > \frac{-1}{\displaystyle\sum_{i=2}^n \dfrac1{\gamma_i}}.
\end{equation*}
If all of the positive edges have an equal weight, $\gamma_i \equiv
\gamma$, then we obtain
\begin{equation*}
  \gamma_1 > -\gamma/(n-1).
\end{equation*}
In this case, we see that increasing $\gamma$ helps with stability,
whereas increasing the number of edges, all things being equal, hurts
stability.

\subsection{Two-cycle graphs}\label{sec:2cycle}

We can consider the general two-cycle graph; in fact, we can reuse
many of the computations of Section~\ref{sec:diamond} above.

As in the previous section, we simplify by assuming that there are no
edges in $\tree(G)$, i.e. each edge is in one or more cycles.  Let us
split the edges into three sets $S_1,S_2,S_{12}$; the edges in $S_1$
are those only in cycle one, those in $S_2$ are those only in cycle
two, and finally $S_{12}$ are those edges both in cycles one and two.
It is clear that we can choose $S_1,S_2$ to be non-empty, but for some
graphs, it may then be the case that $S_{12}$ is empty (e.g. a graph
with a figure-eight structure).  Let us reorder the edges so that when
ordered, the sets $S_1,S_2,S_{12}$ come in order, and we write $k_* =
|S_*|$.  From this we see that we can choose the cycle basis as
\begin{equation*}
  y_1 = (1,1,\dots,1,0,\dots,0,1,\dots, 1),\quad   y_2 = (0,0,\dots,0,1,\dots,1,1,\dots, 1),
\end{equation*}
where the three groups of edges are of length $k_1,k_2,k_{12}$.  If we then define 
\begin{equation*}
  w_* = \sum_{e\in S_*} \frac1{\gamma_e},
\end{equation*}
then we see that the cycle form $Z_G$ has the same structure as
in~\eqref{eq:diamond-zg}, and the subsequent analysis holds over for
this case.

The simplest case is the one where $S_{12} = \emptyset$ and $w_{12} =
0$.  In this case $Z_G$ is diagonal, and the number of positive
eigenvalues is exactly the number of negative diagonal entries.  For
example, let us say that the edge $e$ for which $\gamma_e < 0$ is in
cycle $1$, and thus we need $w_1< 0$ to have a stable Laplacian.  This
gives
\begin{equation*}
  \frac1{\gamma_e} + \sum_{f\neq e,f\in S_1} \frac1{\gamma_f} < 0,
\end{equation*}
or
\begin{equation*}
  \gamma_e > \dfrac{-1}{\sum_{f\neq e,f\in S_1} 1/{\gamma_f}}.
\end{equation*}
This is exactly the condition as in the one-cycle case.  Similarly, if
the graph has two negative edges, we need both $w_1,w_2<0$ and the
conditions on each cycle are decoupled.

In the case where $w_{12}>0$, this gives an analysis similar to that
given above.  Let us consider the case where there is one negative
edge, and it is in both cycles one and two.  We again have the
condition
\begin{equation*}
  w_{12} < -\frac{w_1w_2}{w_1+w_2}.
\end{equation*}
If, for example, we consider the case where all of the positive edges
have the same weight $\gamma$, and the one negative edge has weight
$\gamma_e<0$, then we have
\begin{equation*}
  w_1 = k_1/\g,\quad w_2 = k_2/\g,\quad w_{12} = \frac{k_{12}-1}\gamma + \frac1{\gamma_e}.
\end{equation*}
This simplifies to
\begin{equation}\label{eq:geg}
  \frac1{\gamma_e} < -\frac 1\g \left(\frac{k_1k_2}{k_1+k_2} + (k_{12}-1)\right).
\end{equation}
Again, increasing $\gamma$ helps for stability since it makes the
condition easier to achieve, but increasing any of $k_1,k_2,k_{12}$
makes the condition harder to achieve.

The interpretation of these facts is as follows: clearly, increasing
the strength of the positive couplings will make it harder for the
negative coupling to destabilize the entire system, and in fact
from~\eqref{eq:geg} we see that the two weights are effectively
linearly related.  Moreover, we see that increasing the number of
links in either of the cycles makes it easier to destabilize the
system, similarly to the case of a single cycle.

\section{Nonlinear example --- Kuramoto model on a ring}\label{sec:nlex}

The examples in the previous section studied the stability of linear
Laplacians, but we can also use the current results to obtain novel
results for nonlinearly coupled oscillators as well --- in particular
we consider the Kuramoto model.

It has been known for a long time that if we have a fixed point for
the Kuramoto model where all of the links are ``short'', i.e. the
angle difference across a link is less than $\pi/2$ in absolute value,
then this gives a stable fixed point, see~\cite{Ermentrout.1992}.  We
could then pose the question of whether there exist solutions with
``long'' links that are still stable (this would correspond to having
negative off-diagonal terms in the Jacobian).  This question was
studied numerically in~\cite{Dorfler.Chertkov.Bullo.13}; there the
authors considered a large class of random networks and random
frequencies, and showed that stable fixed points with long links do
exist, but they are exceedingly rare (typically the proportion of
stable points found that have long links were less than $1/1000$).

Intuitively, it makes
sense that if we make the weight $\delta_{ij}$ on an edge arbitrarily
small, then this is possible, but we could then pose the question: can
we have a stable long-link solution if the edge weights are all equal?
We will show that this answer is affirmative on the ring by a
judicious choice of frequencies.  In particular, we will answer the
following question for the ring topology: given a specific graph
topology for the Kuramoto system with equal edge weights, what is the
largest possible link for a stable fixed point of the system?

We consider the Kuramoto model as in~\eqref{eq:Kuramoto}, where we
assume that the graph is a ring with $n$ vertices and that the edge
weights are all equal, and by rescaling we can assume that they are
all one, so that $\delta_{i,i+1} = 1$ (here and below this sum on the
indices is understood modulo $n$).  Let $\theta^*$ be a fixed point
of~\eqref{eq:Kuramoto}.  From this we have
\begin{equation}\label{eq:Knn}
   0 = \omega_i + \sin(\theta^*_{i+1}-\theta^*_i) + \sin(\theta^*_{i-1}-\theta^*_i),
\end{equation}
and the Jacobian at this point is given by $J$, where
\begin{equation*}
  J_{i,i\pm 1} = \cos(\theta_i^* - \theta_{i\pm 1}^*),
\end{equation*}
and the diagonal terms are chosen to make it a Laplacian.  Then we see
from the previous section that it is a stable fixed point if one of
the following two conditions are met:
\begin{enumerate}

\item For all $i=1,\dots, n$, $\gamma_{i,i+1} :=  \cos(\theta_i^\star -
  \theta_{i+1}^\star) > 0$;

\item one of the $\gamma_{i,i+1}$ is negative, and
\begin{equation}\label{eq:cossum}
  \sum_{i=0}^{n-1} \frac{1}{\gamma_{i,i+1}} = \sum_{i=0}^{n-1} \frac{1}{\cos(\theta_i^\star -
  \theta_{i+1}^\star)} < 0.
\end{equation}
\end{enumerate}

Note that $\cos(\theta)>0$ iff $\av{\theta} < \pi/2$, and thus the
sign of the cosine of the angle difference tells us whether a link is
long or short.  These conditions can be reinterpreted as saying: the
fixed point is stable if all of the links are short, or if there is
one long link, then we need the sum of the reciprocals of the edge
weights to be negative.  The question then remains: is there a
configuration $\theta^\star$ that is a fixed point
of~\eqref{eq:Kuramoto}, has one long link, and
satisfies~\eqref{eq:cossum}?

Let us write $\zeta_i = \theta_{i+1} -
\theta_i$.  Note that every term in the right-hand side
of~\eqref{eq:Knn} will be a $\zeta_i$, except for the term
$\theta_{n-1}-\theta_0$ which we can write as $\sum_{i=0}^{n-2}
\zeta_i$.  Then~\eqref{eq:cossum} becomes
\begin{equation}\label{eq:cossum2}
  \sum_{i=0}^{n-1} \frac1{\cos(\zeta_i)} + \frac{1}{\cos\left(\sum_{i=0}^{n-1} \zeta_i\right)}<0.
\end{equation}
Without loss of generality, let us assume that the longest link is
given by the gap $\theta_{n-1} - \theta_0$, so that if there is one
long link, it is represented by the last term in~\eqref{eq:cossum2}. 

We can then pose this problem in an alternate form as follows: for any
$\eta \in (0,2\pi)$, let us find the maximum of
\begin{equation*}
  f(\zeta) = \sum_{i=0}^{n-2} \frac1{\cos(\zeta_i)},
  \mbox{ subject to }
  g(\zeta) = \sum_{i=0}^{n-2} \zeta_i = \eta.
\end{equation*}
Then it is possible to realize a fixed point with largest gap $\eta$ iff
\begin{equation*}
  \min_\zeta f(\zeta) + \frac1{\cos\eta} < 0.
\end{equation*}
The nice thing we see from the constrained optimization problem is
that since the constraint is symmetric up to any permutation of the
variables, any internal optimizer must be as well.  (We could also use
Lagrange multipliers to obtain the same result.)  Thus the maximum of
$f$ is acheived only when all of the $\zeta_i$ are equal,
and~\eqref{eq:cossum2} becomes
\begin{equation}\label{eq:cond2}
  \frac{n-1}{\cos(\zeta)} + \frac{1}{\cos((n-1)\zeta)} < 0.
\end{equation}
Summarizing our conditions, if we assume that $\cos(\zeta)$ is positive, then iff the conditions
\begin{equation*}
  \cos((n-1)\zeta) > 0\mbox{ or } (\cos((n-1)\zeta)< 0 \mbox{ and } \frac{n-1}{\cos(\zeta)} + \frac{1}{\cos((n-1)\zeta)} < 0
\end{equation*}
hold, then we have a stable fixed point for the solution $\theta_k^* =
k\zeta$.  Note that both of these conditions can be simplified to
require that their product be positive, i.e.
\begin{equation}\label{eq:hn}
  h_n(\zeta):= (n-1)\frac{\cos(n-1)\zeta}{\cos(\zeta)} + 1 > 0.
\end{equation}
To exhibit a ``long link'' solution, we need to show that there is
$\zeta^*$ with roots of $h_n(\zeta^*)=0$ and $|n\zeta^* \pmod 2\pi|
\in (\pi/2,3\pi/2)$.  We will show numerically that this is possible
for every $n$, see Table~\ref{tab:zeta}.

\begin{table}[ht]
\begin{centering}
\begin{tabular}{|c|c|}\hline
 $n$ & $n\zeta^*/2\pi$ \\\hline
 3 & 0.447 \\\hline
 4 & 0.392 \\\hline
 5 & 0.358 \\\hline
 10 & 0.297 \\\hline
 20 & 0.272 \\\hline
 30 & 0.264 \\\hline
 40 & 0.261 \\\hline
 50 & 0.258 \\\hline
\end{tabular}
\caption{Length of longest possible link for a stable solution on the
  ring with $n$ vertices}
  \label{tab:zeta}
\end{centering}
\end{table}

Here we have found the first root of $h_n(\zeta)$ and compute
$n\zeta_n$: this is the distance between $\theta_0$ and $\theta_{n-1}$
if we choose the solution $\theta_k = k\zeta^*$.  Note that in each
case, the length of the link is greater than a quarter turn (note that
we are presenting the length divided by $2\pi$, so that a link is
``long'' iff it is greater than $1/4$), but this distance decreases
monotonically to $1/4$ as $n\to\infty$.  

Note that in general, $h_n(\zeta)$ has multiple roots, so that the
first root may not always give the largest link, but we have found
empirically that this is so.  As such, the numbers in
Table~\ref{tab:zeta} represent the longest link possible for a stable
fixed point of the Kuramoto system on a ring of size $n$.

We can also compute the values of $\omega_k$ that give this solution.
For all $k=1,\dots,n-2$, we have $\theta_{k+1}-\theta_k =
\theta_{k-1}-\theta_k = \zeta$, so that $\omega_k = 0$.  However, we
also have
\begin{equation*}
  \omega_{n-1}(\zeta):= - \sin(\theta_{0}-\theta_{n-1}) - \sin(\theta_{n-2}-\theta_{n-1}) = \sin((n-1)\zeta) + \sin(\zeta),
\end{equation*}
and $\omega_0 = -\omega_{n-1}$.

\begin{figure}[ht]
\begin{centering}
  \includegraphics[width=0.75\textwidth]{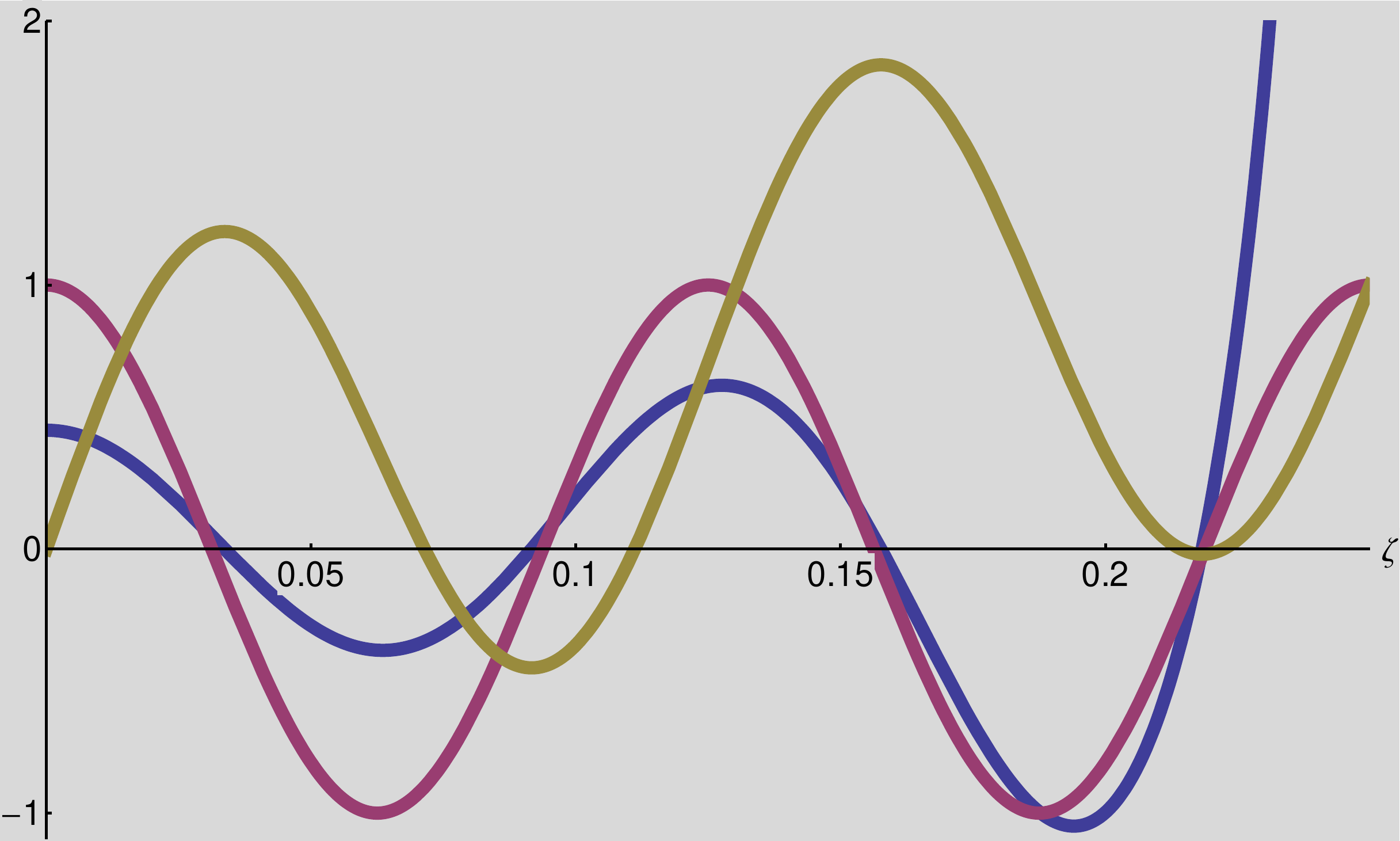}
  \caption{We plot several of the functions referenced in the text for
    $n=9$ vertices in the ring: $0.05 * h_9(\zeta)$ (blue), $\omega_8(\zeta)$
    (orange), $\cos(8\zeta)$ (purple).}
  \label{fig:curves}
\end{centering}
\end{figure} 

We plot several of the curves for the case of $n=9$ in
Figure~\ref{fig:curves}.  We have scaled $h_9(\zeta)$ so that all the
curves are of the same order of magnitude; note that only its sign
matters.  As shown above, where the blue curve is positive, the
solution is stable.  The purple curve is the cosine of the length of
the $(n-1,0)$ link: since we see that this becomes negative before
$h_9(\zeta)$ does, there is a stable long link configuration.
Finally, note that the zeros of $h_9(\zeta)$ correspond to the
critical points of $\omega_8(\zeta)$ (this can be checked), and thus
the $\omega_8(\zeta)$ curve can be interpreted as a series of stable
and unstable branches: whenever $\omega_8(\zeta)$ is increasing in
$\zeta$, this corresponds to a stable solutions, and when it is
decreasing, this corresponds to a $1$-saddle.  As we slide up and down
in this graph, this leads to saddle--node bifurcations in the obvious
manner.  Moreover, note that the $\omega=0$ slice is the one
considered in~\cite{DeVille.12} --- it was shown there that for $n=9$
there are two nontrivial sinks and two corresponding saddles, and this
is recovered here.  By decreasing or increasing $\omega_7$, we can
cause these saddles to collide with either of two sinks.

\section{Proof of Theorems}\label{sec:proofs}

\subsection{Proof of Theorem~\ref{thm:index}}

\subsubsection{Definition of covering tree}

\begin{define}
  Let $G = (V,E,\G)$ be a connected graph.  We define the {\bf
    universal cover} of $G$, denoted $U(G)$.  If $G$ is a tree, then
  $U(G) = G$.  If $G$ is not a tree, then $U(G)$ is a countably
  infinite tree defined as follows:

\begin{itemize}

\item Choose a root vertex $r\in V(G)$.

\item The vertices of $U(G)$ are {\em non-backtracking rooted paths}
  in $G$, i.e. sequences of the form $w=(r,v_2,v_3,\dots, v_n)$ where
  $(v_i,v_{i+1})\in E$, and $v_{i-1}\neq v_{i+1}$ for all $i$.

\item We say that two vertices $w_1,w_2\in V(U(G))$ are adjacent if
  one is an extension of the other, i.e.
\begin{equation*}
  w_1 = (r,v_2,v_3,\dots, v_n),\quad w_2 = (r,v_2,v_3,\dots, v_n,v_{n+1}),
\end{equation*}
or vice versa.

\item Finally, we define the weight of the edge $(w_1,w_2)$ in the
  example above to be the same as the weight on the edge
  $(v_{n-1},v_n)$.

\end{itemize}
\end{define}

\begin{define}[Fundamental domain]\label{def:T}
  For any graph $G=(V,E,\G)$, we define $T$, a {\bf fundamental domain
    of the cover} of $G$, as follows.  First construct the universal
  cover $U(G)$.  Choose $T$ to be any finite connected subtree of
  $U(G)$ such that each edge of $G$ appears once.  Notice that by
  definition, $G$ and $T$ have the same edge sets, so we will write
  $E$ for either set.  Since there are $|E|$ edges in $T$ and it is a
  tree, then $|V(T)=|E|+1$.  We will also use the notation $N_G =
  |V(G)|$ and $N_T = |V(T)|$ throughout.
\end{define}

\begin{remark}  The universal cover has certain properties:

\begin{enumerate}

\item If $G$ is not a tree, then $U(G)$ has countably many vertices.
  The degree of the node $w = (r,v_2,\dots, v_n)$ in $U(G)$ is the
  same as the degree of $v_n$ in $G$.  In particular, $U(G)$ is
  locally finite.

\item The map 
\begin{align*}
  \varphi\colon U(G) &\to G,\\
  (r,v_2,\dots,v_n)&\mapsto v_n
\end{align*}
is a covering map, i.e. it is a map of graphs, it is surjective, and
it is an isomorphism in the neighborhood of any vertex.

\item The construction of $T$ is independent (up to isomorphism) of
  the choice of the root vertex $r$.

\item Finally, it should be noted that we can construct $T$ directly
  without using the universal cover, see Example~\ref{exa:diamond} for
  an example.

\end{enumerate}
\end{remark}
\begin{prop}\label{prop:surjection}
  Let $G=(V,E,\G)$ be a connected weighted graph, and define $T$
  as above.  Then the graph map $\varphi\colon T\to G$ given by
\begin{equation*}
  \varphi((r,v_2,\dots, v_n)) = v_n
\end{equation*}
is a surjective map.  Note that $|V(T)| - |V(G)| = C$, i.e. the
difference in the number of vertices of a fundamental domain of the
covering tree and the original graph itself is the cycle rank of the
graph.
\end{prop}

\begin{proof}
  Assume that $w_1,w_2\in V(T)$ and $(w_1,w_2)\in E(T)$.
  Since $w_1,w_2$ are adjacent in $U(G)$, we have that
\begin{equation*}
  w_1 = (r,v_{1},\dots, v_{k}),\quad   w_2 = (r,v_{1},\dots, v_k, v_{k+1}),
\end{equation*}
and $(v_k,v_{k+1})\in E(G)$.  Then $\varphi(w_1) = v_k$ and
$\varphi(w_2) = v_{k+1}$ are adjacent in $G$ whenever $w_1,w_2$ are
adjacent in $T$.  Therefore $\varphi$ is a graph homomorphism.  To see
that it is surjective, note that every edge of $G$ appears in $T$, and
therefore its incident vertices must as well.  Finally, note that
\begin{equation*}
  |V(T)| - |V(G)| = |E| + 1 - |V(G)| = C.
\end{equation*}

\end{proof}

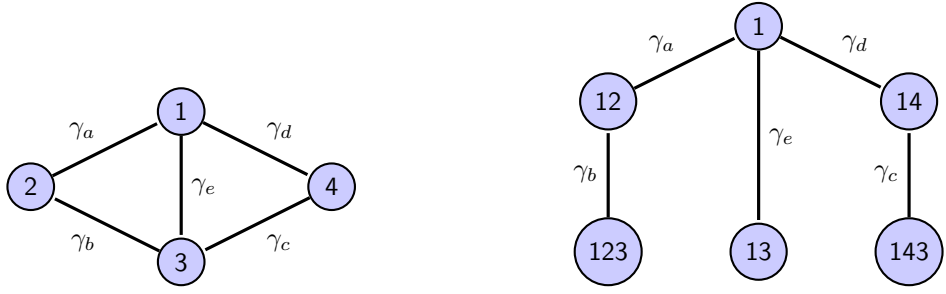
\begin{figure}[ht]
\begin{center}
\begin{tikzpicture}[>=stealth',shorten >=1pt,auto,node distance=2cm,
  thick,main node/.style={circle,fill=blue!20,draw,font=\sffamily}]
  \node[main node] at (0,1) (1) {1};
  \node[main node] at (2,0) (4) {4};
  \node[main node] at (-2,0) (2) {2};
  \node[main node] at (0,-1) (3) {3};
  \path[very thick] (2) edge node  {$\gamma_a$} (1) 
        (1) edge node {$\gamma_e$} (3)
        (3) edge node  {$\gamma_b$} (2)
        (4) edge node  {$\gamma_c$} (3)
        (1) edge node {$\gamma_d$} (4);
\end{tikzpicture}
\hspace{1in}
\begin{tikzpicture}[>=stealth',shorten >=1pt,auto,node distance=2cm,
  thick,main node/.style={circle,fill=blue!20,draw,font=\sffamily}]
  \node[main node] at (0,1) (1) {1};
  \node[main node] at (2,0) (14) {14};
  \node[main node] at (-2,0) (12) {12};
  \node[main node] at (-2,-2) (123) {123};
  \node[main node] at (2,-2) (143) {143};
  \node[main node] at (0,-2) (13) {13};
  \path[very thick] (12) edge node  {$\gamma_a$} (1) 
        (1) edge node {$\gamma_e$} (13)
        (123) edge node  {$\gamma_b$} (12)
        (143) edge node  {$\gamma_c$} (14)
        (1) edge node {$\gamma_d$} (14);
\end{tikzpicture}
\end{center}
\caption{The diamond graph $G$, and a choice of fundamental domain of
  its universal cover $T$.  Note that $G$ and $T$ have the same edge
  sets with five edges, and $T$ necessarily has six vertices.}
\label{fig:diamond.cover}
\end{figure}

\begin{example}\label{exa:diamond}
  Let us consider the diamond graph in Figure~\ref{fig:diamond.cover}.
  If we choose the root vertex as $1$, then $2,3,4$ are neighbors and
  so we add $12,13,14$ to $T$.  Since $\gamma_b,\gamma_c$ have not yet
  appeared, we add them as in the tree.  Of course, this is not the
  only covering tree we can obtain, even assuming that we choose $1$
  as the root; for example, if we remove vertex $143$ and add a vertex
  $1234$, this would also be a covering tree.
\end{example}

\subsubsection{The projections}

\begin{define}\label{def:X}
  Let $G$ be a connected graph, and $T$, $\varphi\colon T\to G$ as
  defined above.  We define $X$ to be the $N_T\times N_G$ matrix whose
  elements are
\begin{equation*}
  X_{a,v} = 1(\varphi(a)=v).  
\end{equation*}
We will also refer to the columns by the vectors $x^v,v\in V(G)$.
\end{define}

\begin{lem}\label{lem:Xproj}
  Let $G=(V,E,\G)$ be a connected weighted graph, and $T, X$ as in
  Definitions~\ref{def:T},~\ref{def:X}.  Then
  \begin{equation}\label{eq:vw1}
    \L_G = X^\top \L_T X,
  \end{equation}
  or
  \begin{equation}\label{eq:vw2}
    (\L_G)_{v,w} = \ip{x^v}{\L_T x^w}.
  \end{equation}
\end{lem}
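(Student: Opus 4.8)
The plan is to reduce everything to the incidence-matrix factorization $\L = -BDB^\top$ already established in Lemma~\ref{lem:Laplacian-decomposition}, applied to both $G$ and to the tree $T$ (which is connected, so the lemma applies). The crucial structural observation is that, because $\varphi\colon T\to G$ is a bijection on edges, the two graphs share a common edge set $E$ and hence a common diagonal weight matrix $D_G = D_T =: D$; the only thing that differs is the vertex set, and that difference is recorded precisely by $X$.

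First I would orient the edges of $G$ and $T$ compatibly: since $\varphi$ is an edge-bijection, I orient each edge $e$ of $G$ so that its head (resp.\ tail) in $G$ is the $\varphi$-image of the head (resp.\ tail) of the corresponding edge of $T$. With this convention the key step is to prove the identity
\begin{equation*}
  B_G = X^\top B_T.
\end{equation*}
To verify this entrywise I would compute $(X^\top B_T)_{v,e} = \sum_{a\colon \varphi(a)=v} (B_T)_{a,e}$. For a fixed edge $e$ only its head $a_+$ and tail $a_-$ in $T$ contribute, giving $+1$ and $-1$ respectively; since $\varphi$ carries these to the head and tail of $e$ in $G$, the sum collapses to $(B_G)_{v,e}$. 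The one point requiring care --- and the main obstacle --- is ensuring $\varphi(a_+)\neq\varphi(a_-)$, so that the $+1$ and $-1$ land in different rows and do not cancel. This follows because $G$ is simple, so the image edge has distinct endpoints and cannot be a self-loop.

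Granted the identity $B_G = X^\top B_T$ together with $D_G = D_T = D$, the lemma is immediate:
\begin{equation*}
  \L_G = -B_G D B_G^\top = -X^\top B_T D B_T^\top X = X^\top\l(-B_T D B_T^\top\r) X = X^\top \L_T X,
\end{equation*}
which is~\eqref{eq:vw1}; reading off the $(v,w)$ entry, and noting that $(X^\top \L_T X)_{v,w} = \ip{x^v}{\L_T x^w}$ since $x^v$ is the $v$th column of $X$, yields~\eqref{eq:vw2}.

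As a self-contained alternative that avoids the incidence factorization, I would instead compute $(X^\top \L_T X)_{v,w} = \sum_{\varphi(a)=v,\,\varphi(b)=w}(\L_T)_{a,b}$ directly. For $v\neq w$ only off-diagonal entries of $\L_T$ appear, and because $\varphi$ is an edge-bijection and $G$ is simple there is at most one pair $(a,b)$ of adjacent vertices of $T$ mapping onto $(v,w)$, contributing exactly $\gamma_{(v,w)}$ and matching $(\L_G)_{v,w}$. The diagonal is then forced by the row-sum condition: each row of $X$ has a single unit entry, so $X\1_G = \1_T$ and hence $X^\top\L_T X\,\1_G = X^\top\L_T\1_T = \0$; exactly as in the proof of Lemma~\ref{lem:Laplacian-decomposition}, agreement of off-diagonal entries together with vanishing row sums forces the diagonal entries to agree as well.
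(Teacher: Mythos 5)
Your proposal is correct, and your primary argument takes a genuinely different route from the paper's. The paper proves the lemma by direct entrywise computation: it writes $\ip{x^v}{\L_T x^w} = \sum_{i,j} (\L_T)_{ij} 1_{\varphi(i)=v} 1_{\varphi(j)=w}$ and observes that, since $\varphi$ is a graph homomorphism and each edge of $G$ appears exactly once in $T$, there is exactly one nonzero term, equal to $\gamma_{(v,w)}$ --- this is essentially your ``self-contained alternative,'' except that the paper leaves the diagonal entries implicit, whereas you complete the argument with the row-sum observation $X\1 = \1$ exactly as in Lemma~\ref{lem:Laplacian-decomposition}. Your main proof instead factors everything through the incidence decomposition: the intertwining identity $B_G = X^\top B_T$ (under compatible orientations) together with $D_G = D_T = D$ gives
\begin{equation*}
  \L_G = -B_G D B_G^\top = X^\top\l(-B_T D B_T^\top\r) X = X^\top \L_T X
\end{equation*}
in one line. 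What this buys: the combinatorial content is isolated in the single, easily checked identity $B_G = X^\top B_T$, whose only subtlety --- which you correctly identify --- is that no edge of $T$ can have both endpoints in the same fiber of $\varphi$, since edges of the simple graph $G$ join distinct vertices; and the diagonal entries of $\L_G$ come along for free rather than requiring a separate argument, a point the paper's own proof glosses over. The paper's direct computation, by contrast, requires no orientation conventions and makes the role of the edge bijection visible entry by entry. Both routes are sound.
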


\begin{proof}
  Let us use formula~\eqref{eq:vw2}.  Writing this out, we have
\begin{equation*}
  \ip{x^v}{\L_Tx^w} = \sum_{i,j} x^v_i (\L_T)_{ij} x^w_j = \sum_{i,j}(\L_T)_{ij} 1_{\varphi(i)=v}1_{\varphi(j)=w}.
\end{equation*}
If we can show that there exists a unique pair $i,j$ such that
$\phi(i) = v $, $\phi(j) = w,$ and $(\L_T)_{ij} = \gamma_{(v,w)}$,
then we are done.  But notice that since $\phi(i) = v$, and $\phi(j) =
w$, if $(i,j)\in E(T)$, then the weight on the edge $(i,j)$ must be
$\gamma_{(v,w)}$, since $\phi$ is a graph homomorphism.  Moreover,
since each edge of $G$ can appear at most once in $T$, this means that
there is only one such pair.
\end{proof}

\begin{define}\label{def:Y}
  The map $\phi\colon T\to G$ is surjective but in general not
  injective.  For each $v\in V(G)$ with $\av{\phi^{-1}(v)}>1$, choose
  one representative of $\phi^{-1}(v)$ as the {\bf primary
    representative} of this set.  Now, for each $w\in\phi^{-1}(v)$
  with $w\neq \prim(v)$, define the vector $q^w\in \R^{N_T}$ by
\begin{equation*}
  q^w_i = \begin{cases} -1, &i=\prim(v),\\ 1, &i=w,\\ 0,&else.\end{cases}
\end{equation*}
We define $Q$ as the matrix whose columns are the $q^w$.
\end{define}

\begin{lem}\label{lem:Sbasis}
  Let $S\subseteq \R^{N_T}$ be the span of the $x^v$ defined in
  Definition~\ref{def:X}.  Then the vectors $q^w$ form a basis for
  $S^\perp$.  In particular, $\dim(\Span(q^w)) =\rank(Q)= c$, the
  cycle rank of $G$.
\end{lem}

\begin{proof}
  First we see that each $q^w$ is orthogonal to $S$.  Consider $x^v$
  where $v=\phi(w)$.  This vector is equal to $1$ both in the
  $\prim(v)$ and $w$ index, and therefore $\ip{q^w}{x^v} = 0$.
  Moreover, if we consider $x^v$ with $v\neq \phi(w)$, then this
  vector is equal to zero on these indices, and again $\ip{q^w}{x^v} =
  0$.  Since $\dim(\Span(S)) = \rank(X) = |V(G)|$, so that $\rank(Q) =
  |E|+1-\dim S = |E| - |V(G)| +1 = c$.  Finally, note that the $q^w$
  are clearly independent by construction, so by dimension counting
  they form a basis for $S^\perp$.

\end{proof}

\begin{lem}\label{lem:z}
  The Laplacian $\L_T\colon \R^{N_T}\to\R^{N_T}$ is not invertible,
  but has a one-dimensional nullspace given by $\Span({\bf 1})$.  In
  this case, the canonical map
  $$\widetilde\L_T\colon \R^{N_T}/\{{\bf 1}\}\to\R^{N_T}/\{{\bf 1}\} $$
  is invertible.  We then have that
  \begin{equation*}
    Q^\top (\widetilde\L_T)^{-1} Q = \mcZ_G.
  \end{equation*}
\end{lem}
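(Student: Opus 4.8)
The plan is to exploit the factorization $\L_T = -B_T D B_T^\top$ furnished by Lemma~\ref{lem:Laplacian-decomposition}, applied to the tree $T$; since $T$ and $G$ share an edge set and edge weights, the diagonal weight matrix is the same $D$ that appears in $\mcZ_G$. Because $T$ is a tree its cycle rank is $0$, so $B_T$ has trivial kernel, while connectedness gives $\ker(B_T^\top) = \Span(\mathbf{1})$. From these two facts I would read off the easy claims first: $\L_T\mathbf{1}=0$, and if $\L_T x = 0$ then injectivity of $B_T$ forces $D B_T^\top x = 0$, whence (as $D$ is invertible) $x\in\ker(B_T^\top)=\Span(\mathbf{1})$. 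Thus $\ker\L_T=\Span(\mathbf{1})$ exactly; being symmetric, $\L_T$ has image $\mathbf{1}^\perp$, and the induced map $\widetilde\L_T$ on $\R^{N_T}/\Span(\mathbf{1})$ is invertible.

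The heart of the argument is to identify the columns of $Q$ with cycles of $G$ via the tree-path structure of $T$. First I would record the identity $X^\top B_T = B_G$, which holds because $\varphi$ is an orientation-preserving graph homomorphism that is a bijection on edges: summing the two nonzero entries of a column of $B_T$ over the fibre of $\varphi$ reproduces the corresponding column of $B_G$. Next, for each non-primary representative $w$ with $v=\varphi(w)$, let $\tilde p^w\in\R^{|E|}$ be the signed indicator of the unique path in $T$ from $w$ to $\prim(v)$. A telescoping computation gives $B_T\tilde p^w = e_w - e_{\prim(v)} = q^w$, so $Q=B_T P$ where $P$ has columns $\tilde p^w$. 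Applying $X^\top$ and using $X^\top B_T = B_G$ together with $X^\top q^w = e_v - e_v = 0$ shows $B_G\tilde p^w = 0$, i.e.\ each $\tilde p^w$ lies in the cycle space. Since the $q^w$ are independent (Lemma~\ref{lem:Sbasis}) and $B_T$ is injective, the $\tilde p^w$ are $C$ independent elements of the $C$-dimensional cycle space, hence a basis; this is precisely the basis to be fixed in~\eqref{eq:defofY}, so I take $Y_G = P$ and $\mcZ_G = -P^\top D^{-1} P$.

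With this in hand the final computation is short. For a column $q^w$, solve $\L_T u^w = q^w$, which is solvable since $q^w\in\mathbf{1}^\perp=\mathrm{im}\,\L_T$; the value $\ip{q^{w'}}{u^w}$ is independent of the representative $u^w$ because $q^{w'}\perp\mathbf{1}$, so it computes the $(w',w)$ entry of $Q^\top(\widetilde\L_T)^{-1}Q$. Writing $-B_T D B_T^\top u^w = q^w = B_T\tilde p^w$ and using injectivity of $B_T$ gives $B_T^\top u^w = -D^{-1}\tilde p^w$, and therefore
\[
\ip{q^{w'}}{u^w} = \ip{B_T\tilde p^{w'}}{u^w} = \ip{\tilde p^{w'}}{B_T^\top u^w} = -\,(\tilde p^{w'})^{\top} D^{-1}\tilde p^{w},
\]
which is exactly the $(w',w)$ entry of $-P^\top D^{-1}P = \mcZ_G$.

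I expect the main obstacle to be the middle paragraph: correctly setting up the path vectors $\tilde p^w$ and verifying both $B_T\tilde p^w = q^w$ and $B_G\tilde p^w = 0$ with consistent orientation conventions, then arguing they form the cycle basis underlying $\mcZ_G$. The surrounding linear algebra—the kernel computation for $\L_T$ and the final three-line contraction—is routine once the identity $X^\top B_T = B_G$ and the factorization $\L_T = -B_T D B_T^\top$ are in place.
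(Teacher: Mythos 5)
Your proof is correct and follows essentially the same route as the paper's: factor $\L_T = -B_TD_GB_T^\top$, send each $q^w$ to the signed indicator of the tree path from $w$ to $\prim(\varphi(w))$ so that $B_T$ maps it to $q^w$, identify these path vectors as the cycle basis fixed in~\eqref{eq:defofY}, and contract to get $-Y^\top D_G^{-1}Y = \mcZ_G$. The only differences are bookkeeping: you solve $\L_T u^w = q^w$ with representatives rather than inverting the induced map $\widetilde{B}_T$ on the quotient, and you explicitly verify via the identity $X^\top B_T = B_G$ that the path vectors lie in the cycle space of $G$ --- a step the paper asserts without proof.
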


\begin{proof}
In general, for any graph $H$, the incidence matrix $B_H$ is map from
$\R^{|E(H)|}$ to $\R^{|V(H)|}$.  In particular, since $|E(T)| =
|E(G)|$ and $|V(T)| = |E(G)|+1$, this means that
\begin{equation*}
  B_{T}\colon \R^{|E(T)|} \to \R^{|V(T)|}
\end{equation*}
is a map of co-rank one, and as such $B_{T}^\top$ is a map with
a one-dimensional nullspace, which is $\Span({\bf 1})$.  Therefore we
can define $\widetilde{B}$ to be the canonical map so that
\begin{equation*}
  \widetilde{B}\colon \R^{|E(T)|}\to \R^{|E(T)|+1}/\{{\bf 1}\}
\end{equation*}
is invertible: to define this, we first define $\widetilde{B}^\top$ to
be the induced map on the quotient and then $\widetilde{B} =
(\widetilde{B}^\top)^\top$.  In particular, note that $\widetilde{B} =
B$ on ${\bf 1}^\perp$.

For each $w$ such that $q^w\neq \0$, consider the unique path in
$T$ from $w$ to $\prim(\phi(w))$.  Let us write this path as 
\begin{equation*}
  w=v_0\to v_1\to v_2\to\dots\to v_{n-1} \to v_n = \prim(\phi(w)).
\end{equation*}
Define a vector $r^w\in\R^{|E(T)|}$ as follows:
\begin{equation*}
  r^w_e = \begin{cases} 1,& e=(v_i,v_{i+1}),\exists i,\\ -1 ,& e=(v_{i+1},v_{i}),\exists i,\\0,&\mbox{else.}\end{cases}
\end{equation*}
We first compute that 
\begin{equation*}
  B_{T}r^w = q^w.
\end{equation*}
To see this, note that $B_{T}r^w = 0$ on any vertex not in the
path, and, moreover, will be zero on any vertex on the interior of the
path due to cancellation.  We need only consider the endpoints.  

First, assume that $w\to v_1$ is the chosen orientation for the edge.
Then $(B_{T}r^w)_w= ((B_{T})_{(w,v_1)}r_{(w,v_1)} = 1\cdot
1 = 1$.  If, on the other hand, $v_1\to w$ is the orientation, then we
have $(B_{T}r^w)_w= ((B_{T})_{(w,v_1)}r_{(w,v_1)} =
-1\cdot -1 = 1$ as well.  Similarly,
$(B_{T}r^w)_{\prim(\phi(w))} = -1$.

From this, it follows that if we define  
\begin{equation}\label{eq:defofY}
  Y = \widetilde{B}_{T}^{-1}Q,
\end{equation}
then the columns of $Y$ form a basis for the cycle space of $G$.

By Lemma~\ref{lem:Laplacian-decomposition}, $\L_T =
-B_{T}DB_{T}^\top$, and it is easy to see that similarly,
$\widetilde{\L}_T = -\widetilde B_{T}D\widetilde
B_{T}^\top$.  From this we have
\begin{equation*}
  Q^\top (\widetilde\L_T)^{-1} Q = -Q^\top \widetilde B^{-\top}_{T}D\widetilde
B_{T}^{-1}Q = -Y^\top D Y,
\end{equation*}
giving the definition~\eqref{eq:defofZ}.

\end{proof}

{\bf Proof of Theorem~\ref{thm:index}.}  Recall the definition of
$M:=\widetilde{\L}(T)$.  From Lemma~\ref{lem:Xproj} we have that
\begin{equation*}
  \widetilde{\L}_G = X^\top \widetilde{\L}_T X,
\end{equation*}
and Lemma~\ref{lem:sylvester} gives that
\begin{equation*}
  n_+(\L_G) = n_+(M|_S),
\end{equation*}
since the columns of $X$ form a basis for $S$.  From
Lemma~\ref{lem:z} we have that
\begin{equation*}
  \mcZ_G = Q^\top (\widetilde{\L}_T)^{-1} Q,
\end{equation*}
and Lemma~\ref{lem:sylvester} again gives that
\begin{equation*}
  n_+(\mcZ_G) = n_+((M)^{-1}|_{S^\perp}),
\end{equation*}
since the columns of $Q$ form a basis for $S^\perp$.  Using
Lemma~\ref{lem:haynsworth} gives
\begin{equation*}
  n_+(\L_G) = n_+(\L_T) - n_+(\mcZ_G).
\end{equation*}

Recall that $T$ is a tree.  Using Theorem~2.10
of~\cite{Bronski.DeVille.14}, this implies that the number of positive
eigenvalues of $\L_T$ are given exactly by the number of negative
edges in $E(T)$.  Recalling that the edge sets of $G$ and $T$ are the
same, we are done.

Finally, to prove the last sentence of the theorem, we again appeal to
Theorem~2.10 of~\cite{Bronski.DeVille.14}.  One consequence of this
theorem is that for $\L_G$ to be stable, $G_+$ must be connected.  If
the number of negative edges in $G$ is the same as the number of
cycles, this means that $G_+$ is a tree, and thus is a spanning tree
of $G$.  For each negative edge in $G$, consider the two incident
vertices and the unique path in $G_+$ connecting them.  This plus the
addition of the negative edge make a cycle in $G$, and it is clear
that each negative edge will be in a different cycle.  Using this
choice of cycles, we now have exactly one negative edge in each
cycle.\qed

\subsection{Proof of Theorem~\ref{thm:detred}}

We again start with several lemmas.

\begin{define}\label{def:U}
  Recall $G,T,\phi\colon T\to G$.  For each $v\in V(G)$, if
  $|\phi^{-1}(v)| = 1$, let $\psi(v) = \phi^{-1}(v)$; else, $\psi(v) =
  \prim(v)$ as defined in Definition~\ref{def:Y}.  Then order the
  vertices of $V(T)$ so that we list all of the vertices in the range
  of $\psi$ first, and then all of the other vertices in some order.
  It is easy to see that the range of $\psi$ as $N_G$ elements.

  We then define the first $N_G$ columns of the same way as we did in
  the definition of $X$, i.e. for $v\in V(G)$, define the $v$th column of $U$, $u^v$, by
  \begin{equation*}
    (u^v)_a = \begin{cases} 1,& \phi(a) = v,\\ 0,& \mbox{else},\end{cases}
  \end{equation*}
  and for columns $i=N_G+1,\dots, N_T$, define $u^i$ to be the
  standard basis function with a 1 in the $i$th slot.  By
  construction, $U$ is a lower triangular matrix with $1$s on the
  diagonal, so $\det(U) = 1$.
\end{define}

\begin{lem}\label{lem:ul}
  The $N_G\times N_G$ upper-left block of $U^\top \L_{T} U$ is
  $\L_G$.
\end{lem}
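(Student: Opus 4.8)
The plan is to observe that Lemma~\ref{lem:ul} is essentially an immediate consequence of Lemma~\ref{lem:Xproj}, once we identify the relevant columns of $U$. The key point is that the first $N_G$ columns of $U$, namely the vectors $u^v$ for $v\in V(G)$, are \emph{identical} to the columns $x^v$ of the matrix $X$ from Definition~\ref{def:X}: both are defined by placing a $1$ in position $a$ precisely when $\phi(a)=v$, and $0$ otherwise. The vertex reordering introduced in Definition~\ref{def:U} affects the row indexing but not these column vectors as elements of $\R^{N_T}$.

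With this identification in hand, I would compute the $(v,w)$ entry of the upper-left $N_G\times N_G$ block directly. Since matrix multiplication gives
\begin{equation*}
  (U^\top \L_T U)_{v,w} = \ip{u^v}{\L_T u^w}
\end{equation*}
for indices $v,w$ among the first $N_G$ columns, and since $u^v = x^v$ and $u^w = x^w$, this equals $\ip{x^v}{\L_T x^w}$. Applying Lemma~\ref{lem:Xproj}, specifically equation~\eqref{eq:vw2}, this is exactly $(\L_G)_{v,w}$. Ranging over all $v,w\in V(G)$ then shows the upper-left block is precisely $\L_G$.

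There is no real obstacle here: the lemma is a restatement of Lemma~\ref{lem:Xproj} adapted to the larger matrix $U$, which simply augments $X$ with standard basis vectors in its remaining columns. The only thing requiring care is to confirm that the block in question genuinely corresponds to the $x^v$ columns and is unaffected by the triangularizing reordering of the vertices of $T$; this is immediate from the column definitions. The point of passing from $X$ to $U$ is not this lemma per se but rather the fact that $\det(U) = 1$, which will allow one to compare the determinant of $\L_T$ with that of its conjugate under $U$ without introducing spurious factors. That is what makes $U$ the right tool for the determinant computation underlying Theorem~\ref{thm:detred}.
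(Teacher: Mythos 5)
Your proof is correct and takes essentially the same route as the paper: the paper's own proof consists of observing that, with the chosen vertex ordering, the first $N_G$ columns of $U$ are exactly the matrix $X$, so the upper-left block of $U^\top \L_T U$ is $X^\top \L_T X = \L_G$ by Lemma~\ref{lem:Xproj}. Your version simply writes out the entrywise computation $\ip{u^v}{\L_T u^w}=\ip{x^v}{\L_T x^w}=(\L_G)_{v,w}$ explicitly, which is a fine (and slightly more detailed) rendering of the identical argument.
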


\begin{proof}
  The proof is more or less the same as Lemma~\ref{lem:Xproj}.  Notice
  that once we have chosen this ordering, the first $N_G$ columns of
  $U$ is the matrix $X$ as defined in Definition~\ref{def:X} with the
  corresponding ordering of vertices.
\end{proof}

\begin{lem}\label{lem:tensors}
  Let $H$ be a symmetric matrix with a one-dimensional nullspace, and
  let $x$ span this nullspace.  Then for any $y,z$, we have
\begin{equation*}
  \det(H+y\otimes z) = \frac{\ip x y \ip x z}{\ip x x}\detred H.
\end{equation*}

\end{lem}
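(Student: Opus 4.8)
The plan is to reduce to the invertible case by a regularization in the null direction. Write $W := \Span(x)^\perp$. Since $H$ is symmetric and $Hx=\0$, for any $v\in W$ we have $\ip{Hv}{x} = \ip{v}{Hx} = 0$, so $H$ preserves $W$; moreover $\ker\bigl(H|_W\bigr) = \ker H \cap W = \{\0\}$, so $H|_W$ is invertible and $\detred H = \det\bigl(H|_W\bigr)\neq 0$. This observation — forced by the hypothesis that the nullspace is exactly one-dimensional — is what lets the argument go through without a separate degenerate case.

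First I would introduce the regularized matrix
\[
  H_\e := H + \frac{\e}{\ip x x}\, x x^\top .
\]
A direct computation gives $H_\e x = \e x$, while $H_\e$ agrees with $H$ on $W$. Hence $x$ is an eigenvector of eigenvalue $\e$, the restriction of $H_\e$ to $W$ equals $H|_W$, and so $H_\e$ is invertible for $\e\neq 0$ with
\[
  \det H_\e = \e\,\detred H, \qquad
  H_\e^{-1} = \frac{1}{\e}\,\frac{x x^\top}{\ip x x} + \bigl(H|_W\bigr)^{-1}P,
\]
where $P$ is the orthogonal projection onto $W$.

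Next, since $H_\e$ is now invertible, I would apply the standard matrix determinant lemma to the rank-one update $y\otimes z = y z^\top$:
\[
  \det(H_\e + y\otimes z) = \det(H_\e)\bigl(1 + \ip{z}{H_\e^{-1}y}\bigr).
\]
Substituting the two displayed formulas, the quantity $\ip{z}{H_\e^{-1}y}$ splits as $\tfrac{1}{\e}\tfrac{\ip x z\,\ip x y}{\ip x x} + R$ with $R := \ip{z}{\bigl(H|_W\bigr)^{-1}Py}$ finite and independent of $\e$. Multiplying through, the factor $\e$ in $\det H_\e$ exactly cancels the pole $\tfrac1\e$, giving
\[
  \det(H_\e + y\otimes z) = \detred H\left(\frac{\ip x y\,\ip x z}{\ip x x} + \e(1+R)\right).
\]
Letting $\e\to 0$ and invoking continuity of the determinant in the entries of $H_\e$ yields the claim.

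The one genuinely delicate point — and the main obstacle — is the singularity of $H$, which forbids a direct appeal to the matrix determinant lemma; the whole argument hinges on the cancellation between the factor $\e$ in $\det H_\e$ and the $1/\e$ pole of $\ip{z}{H_\e^{-1}y}$ coming from the null direction. An alternative that avoids limits altogether is to pass to an orthonormal basis whose last vector is $x/\norm x$, in which $H$ is block diagonal with blocks $H|_W$ and $0$; expanding the resulting bordered determinant of $H + y\otimes z$ by a Schur complement and then applying the affine-in-$s$ identity $\det(A + s\,uv^\top) = \det A + s\,v^\top\operatorname{adj}(A)\,u$ in reverse (with $s=-1$) collapses the correction term back to $\det\bigl(H|_W\bigr)$, recovering the same formula purely algebraically.
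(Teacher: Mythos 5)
Your proof is correct, and it takes a genuinely different route from the paper's. The paper proves the identity by diagonalization: writing $H = WM_1W^\top$ with $M_1 = \mathrm{diag}(0,\lambda_2,\dots,\lambda_n)$ and conjugating the rank-one update to get $\det(H+y\otimes z) = \det(M_1 + (W^\top y)\otimes(W^\top z))$, it then expands this determinant by multilinearity in the rows; since the rank-one part can contribute at most one row and the zero-eigenvalue row of $M_1$ vanishes, exactly one of the $2^n$ terms survives, namely $\ip{w_1}{y}\ip{w_1}{z}\lambda_2\cdots\lambda_n$, which is the claimed formula. You instead regularize in the null direction, apply the matrix determinant lemma to the invertible matrix $H_\e$, and conclude by the cancellation of $\e$ against the $1/\e$ pole of $\ip{z}{H_\e^{-1}y}$ plus continuity of the determinant; your identifications $\det H_\e = \e\,\detred H$ and $H_\e^{-1} = \tfrac{1}{\e}\tfrac{xx^\top}{\ip xx} + (H|_W)^{-1}P$ are exactly right, and both hinge (as does the paper's spectral decomposition) on symmetry, which gives the $H$-invariant orthogonal splitting $\R^n = \Span(x)\oplus W$. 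What each approach buys: the paper's computation is limit-free and self-contained but requires careful bookkeeping in the multilinear expansion; yours outsources that combinatorics to the standard rank-one update formula at the cost of a perturbation-and-limit step, and it isolates cleanly why the answer is proportional to $\ip xy\ip xz$ (only the singular direction survives the $\e\to 0$ limit). Your closing sketch of a second, purely algebraic variant via a bordered Schur-complement expansion is plausible but not developed enough to assess; the regularization argument, however, stands on its own.
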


\begin{proof}
  Since $H$ is self-adjoint, it has real eigenvalues and a orthonormal
  eigenbasis.  Let us write $\lambda_1 = 0$ with $\lambda_2,\dots,
  \lambda_n$ not zero, and let $w_1,\dots,w_n$ be the corresponding
  eigenvectors.  Let $W$ be the matrix whose columns are the $w_i$;
  since the eigenvectors form an orthonormal basis, $W^{-1} = W^\top$.

  Let $M_1$ be the diagonal matrix whose diagonal entries are the
  $\lambda_i$, and define $M_2 = (W^\top y)\otimes (W^\top z)$, i.e. 
  \begin{equation*}
    (M_2)_{ij} = \ip {w_i} y \ip {w_j} z.
  \end{equation*}
  Notice that $H = WM_1W^\top$, since this is just its
  diagonalization, and
\begin{equation*}
  WM_2W^\top = W((W^\top u)\otimes (W^\top w))W^\top = WW^\top(u\otimes w) WW^\top = u\otimes w,
\end{equation*}
where we have used the identity $(Ax)\otimes (By) = A(x\otimes
y)B^\top$.  Thus we have 
\begin{equation*}
  H + u\otimes w = W(M_1 + M_2)W^\top,
\end{equation*}
and since $W$ is unitary this gives
\begin{equation*}
  \det(H+ u\otimes w)  = \det(M_1+M_2).
\end{equation*}
To compute the last term, notice that we can use the multilinearity of
the determinant by rows and break up this into $2^n$ terms.  However,
note two things: the matrix $M_2$ is rank one and thus has only one
linearly independent row, so if we choose more than one row from
$M_2$, the determinant is zero.  Also, the first row of $M_1$ is zero,
so to obtain a nonzero determinant, we must choose the first row of
$M_2$.  Therefore the only term which gives a nonzero determinant is
the one where we choose the first row from $M_2$ and all of the other
rows from $M_1$.  Expanding by minors in the first column, the
determinant of this matrix is
\begin{equation*}
  \ip{w_1}y\ip{w_1}z\lambda_2\times\dots\times \lambda_n.
\end{equation*}
Since we chose $\lambda_1 = 0$, this means that $w_1$ is the unit
vector that spans $\ker(H)$, and also notice that $\detred H$ is by
definition $\lambda_2\times\dots\times \lambda_n$.  Thus we have
\begin{equation*}
  \det(H+y\otimes z)= \ip{w_1}y\ip{w_1}z\detred H
\end{equation*}
If $x$ is a general vector in $\ker(H)$, then $x$ is a scalar multiple
of $w_1$ and we have
\begin{equation*}
  \det(H+y\otimes z)= \frac{\ip{x}y\ip{x}z}{\ip xx}\detred H.
\end{equation*}

\end{proof}

{\bf Proof of Theorem~\ref{thm:detred}.}  Recall the definition of $U$
in Definition~\ref{def:U}.  Define
\begin{equation*}
  A = U^\top (\L_{T} + \1\otimes \1)U = U^\top \L_{T} U + U^\top \1 \otimes U^\top \1.
\end{equation*}
If we can establish the following three identities:
\begin{align}
  \det(A) &= N_T^2 \prod_{e\in E(G)} \gamma_e,\label{eq:A}\\
  \det(A|_S) &= \frac{N_T^2}{N_G} \detred \L_G,\label{eq:As}\\
  \det((A^{-1})|_{S^\perp}) &= \det(\mcZ_G)\label{eq:Ai},
\end{align}
then, using Lemma~\ref{lem:HJ}, we obtain 
\begin{equation*}
  N_T^2 \prod_{e\in E(G)} \gamma_e = \frac{\dfrac{N_T^2}{N_G} \detred \L_G}{\det(\mcZ_G)},
\end{equation*}
and this proves the theorem.  So it remains to show these three identities.

Identity~\eqref{eq:Ai} follows directly from Lemma~\ref{lem:z}.  
To establish~\eqref{eq:A}, we note that
\begin{equation*}
  \det(A) = \det(U^\top (\L_{T} + \1\otimes \1)U) = \det(\L_{T} + \1\otimes \1),
\end{equation*}
since $\det(U) = 1$.  By Lemma~\ref{lem:tensors}, we have
\begin{equation*}
  \det(\L_{T} + \1\otimes \1) = \frac{\ip\1\1\ip\1\1}{\ip\1\1}\detred\L_{T} = N_T\detred\L_{T}, 
\end{equation*}
and since there is only one spanning tree of $T$, by the Matrix Tree
Theorem,
\begin{equation*}
  \detred\L_{T} = N_T \prod_{e\in E(G)} \gamma_e.
\end{equation*}
To establish~\eqref{eq:As}, note that if we use the basis for
$\R^{N_T}$ made from the columns of $U$, then $A|_S$ is just the upper
left $N_G\times N_G$ block of $A$, which is itself the upper left
block of $U^\top \L_{T} U + U^\top \1 \otimes U^\top \1$.  By
Lemma~\ref{lem:ul}, the upper left block of the first term is just
$\L_G$, and the upper left block of the second term can be written as
\begin{equation*}
  \widetilde{U}^\top \1 \otimes \widetilde{U}^\top\1,
\end{equation*}
where $\widetilde(U)$ is just the first $N_G$ columns of $U$.  In
particular, the $v$th row of $\widetilde{U}^\top \1$ is a zero-one
vector with as many ones as vertices in $\phi^{-1}(v)$, and therefore
$u: =\widetilde{U}^\top \1$ is a column vector of height $N_G$ whose $v$th
entry is $\av{\phi^{-1}(v)}$.  

Putting this together, and again using Lemma~\ref{lem:tensors} gives
\begin{equation*}
  \det(A|_S) = \det(\L_G + u\otimes u) = \frac{\ip u\1\ip u \1}{\ip\1\1}\detred \L_G = \frac{\left(\sum_{v\in V(G)}\phi^{-1}(v)\right)^2}{N_G}\detred \L_G = \frac{N_T^2}{N_G}\detred \L_G.
\end{equation*}

\qed

\appendix

\section{Classical lemmae}

This follows from the following duality formula for the number of
negative eigenvalues (or more generally the inertia) of a matrix. This
result seems to have been frequently rediscovered but the first
instance we have found in the literature is due to
Haynsworth~\cite{H.1968}.  This result has been used frequently in
the nonlinear waves literature, in the context of the stability of
traveling wave solutions.

\begin{lem}[Haynsworth]\label{lem:haynsworth}
  Suppose that $M$ is a non-singular $N\times N$ Hermitian matrix.
  Let $S$ a subspace of subspace of $\R^N$, $S^\perp$ the orthogonal
  subspace, $P_S$ the orthogonal projection onto $S$, and $M|_S=P_S
  MP_S$ the restriction of $M$ to $S$.  If $M|_S$ is non-singular, then
\begin{equation*}
  n_+(M) = n_+(M|_S) + n_+((M^{-1})|_{S^\perp})
  \end{equation*}

\end{lem}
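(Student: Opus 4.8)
The plan is to reduce the statement to the classical Schur-complement identity combined with Sylvester's law of inertia. First I would choose an orthonormal basis of $\R^N$ adapted to the splitting $\R^N = S\oplus S^\perp$, listing the first $\dim S$ vectors in $S$ and the remainder in $S^\perp$. In this adapted basis $M$ acquires the block form
\[
  M = \begin{pmatrix} A & B \\ B^\top & D \end{pmatrix},
\]
where $A$ is the top-left block acting on $S$, $D$ the bottom-right block acting on $S^\perp$, and $B$ the coupling. The purpose of this choice is that $M|_S = P_S M P_S$, viewed as an operator on $S$, is represented exactly by the principal block $A$, which is nonsingular by hypothesis; likewise $(M^{-1})|_{S^\perp} = P_{S^\perp}M^{-1}P_{S^\perp}$ is represented by the bottom-right block of $M^{-1}$.

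The key algebraic step is the block $LDL^\top$ congruence factorization
\[
  M = \begin{pmatrix} I & 0 \\ B^\top A^{-1} & I \end{pmatrix}
      \begin{pmatrix} A & 0 \\ 0 & D - B^\top A^{-1} B \end{pmatrix}
      \begin{pmatrix} I & A^{-1} B \\ 0 & I \end{pmatrix},
\]
whose validity requires only that $A$ be invertible. The outer factors are unit triangular, hence invertible and adjoint to one another, so $M$ is congruent to $\mathrm{diag}(A,\, D - B^\top A^{-1}B)$. By Sylvester's law of inertia (Lemma~\ref{lem:sylvester}), congruence preserves the number of positive eigenvalues, giving $n_+(M) = n_+(A) + n_+(D - B^\top A^{-1}B)$, where $D - B^\top A^{-1}B$ is the Schur complement of $A$ in $M$. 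Taking determinants in the factorization also yields $\det(D - B^\top A^{-1}B) = \det M / \det A \neq 0$, so the Schur complement is nonsingular.

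It then remains to match the Schur complement with $(M^{-1})|_{S^\perp}$ at the level of inertia. Inverting the factorization above (or applying the standard block-inversion formula), the bottom-right block of $M^{-1}$ equals $(D - B^\top A^{-1}B)^{-1}$, the inverse of the Schur complement; in the adapted basis this block is precisely $(M^{-1})|_{S^\perp}$. Since a nonsingular Hermitian matrix and its inverse have identical inertia — each eigenvalue is inverted but retains its sign — we obtain $n_+((M^{-1})|_{S^\perp}) = n_+(D - B^\top A^{-1}B)$. Substituting this into the inertia identity from the previous paragraph gives $n_+(M) = n_+(M|_S) + n_+((M^{-1})|_{S^\perp})$, as claimed.

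I expect the only genuinely delicate point to be the bookkeeping that ties the abstract restrictions $M|_S$ and $(M^{-1})|_{S^\perp}$ to the concrete principal blocks in the adapted basis: one must verify that $P_S M P_S$ restricted to $S$ really is the block $A$ (rather than its $N\times N$ zero-padded extension), and correspondingly for the inverse. Once the basis is fixed this identification is routine, and the congruence factorization together with Sylvester's law then does essentially all of the remaining work.
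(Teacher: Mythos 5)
Your proof is correct, and all the key steps check out: the block $LDL^\top$ congruence is valid under exactly the hypothesis that $A$ (the matrix of $M|_S$ in an adapted orthonormal basis) is invertible; Sylvester's law then gives $n_+(M)=n_+(A)+n_+(D-B^\top A^{-1}B)$; the block-inversion formula identifies the bottom-right block of $M^{-1}$ with $(D-B^\top A^{-1}B)^{-1}$; and inversion of a nonsingular Hermitian matrix preserves inertia. You are also right that the only delicate point is the bookkeeping identifying $P_SMP_S|_S$ with the principal block $A$ rather than its zero-padded extension, and your use of an \emph{orthonormal} adapted basis handles this (the change of basis is unitary, hence inertia-preserving).

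For comparison: the paper does not prove this lemma at all. It is quoted in the appendix as a classical result, attributed to Haynsworth's 1968 paper, with only the remark that Haynsworth's original statement is phrased in terms of the Schur complement of a partitioned Hermitian matrix and is ``obviously equivalent.'' Your argument is precisely the standard proof behind that remark --- the Schur complement $D - B^\top A^{-1}B$ is exactly the object Haynsworth works with, and the congruence-plus-Sylvester step is how her additivity-of-inertia formula is established. So your proposal supplies a complete, self-contained proof of a statement the paper merely cites, and it does so by the same route the paper implicitly points to; it also makes explicit the translation between the paper's projection-based formulation ($M|_S = P_SMP_S$, $(M^{-1})|_{S^\perp}$) and the block-matrix formulation, which is the one genuinely nontrivial piece of the equivalence the paper calls obvious.
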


Note that, in her original paper, Haynsworth states this theorem
slightly differently in terms of the Schur complement of the matrix,
but it is obviously equivalent.

\begin{lem}\label{lem:HJ}
  Suppose that $M$ is a non-singular $N\times N$ Hermitian matrix.
  Let $S$ a subspace of subspace of $\R^N$, $S^\perp$ the orthogonal
  subspace, $P_S$ the orthogonal projection onto $S$, and $M|_S=P_S
  MP_S$ the restriction of $M$ to $S$.  If $M|_S$ is non-singular, then
\begin{equation*}
  \det (M) = \frac{\det(M|_S)}{\det(M^{-1}|_{S^\perp})}.
\end{equation*}
\end{lem}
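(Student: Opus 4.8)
The plan is to reduce everything to the classical Schur complement determinant identity, exactly paralleling the proof of the Haynsworth inertia theorem (Lemma~\ref{lem:haynsworth}) but tracking determinants instead of signs. First I would choose an orthonormal basis of $\R^N$ whose first $\dim S$ vectors span $S$ and whose remaining vectors span $S^\perp$. In this basis $M$ acquires the block form
\[
M = \begin{pmatrix} P & R \\ R^\top & Q \end{pmatrix},
\]
where $P$ is the matrix representing $M|_S$ (so that $\det P = \det(M|_S)$), the block $Q$ represents the action on $S^\perp$, and $R$ records the coupling between $S$ and $S^\perp$. The hypotheses that $M$ and $M|_S$ be nonsingular say precisely that $M$ and $P$ are invertible, which is exactly what the two block identities below require.

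Next I would invoke the two standard block-matrix formulas. The Schur complement factorization gives
\[
\det(M) = \det(P)\,\det\!\left(Q - R^\top P^{-1} R\right),
\]
where $Q - R^\top P^{-1} R$ is the Schur complement of $P$ in $M$; this follows by factoring $M$ as a product of a block-lower-triangular matrix, a block-diagonal matrix, and a block-upper-triangular matrix, each of whose determinants is immediate. The second fact I need is the block-inverse identity: the lower-right block of $M^{-1}$, which in this adapted basis is exactly the matrix of $M^{-1}|_{S^\perp}$, equals the inverse of that same Schur complement, namely $\left(Q - R^\top P^{-1} R\right)^{-1}$.

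Combining these two facts finishes the argument. Taking determinants in the block-inverse identity yields
\[
\det\!\left(M^{-1}|_{S^\perp}\right) = \frac{1}{\det\!\left(Q - R^\top P^{-1} R\right)},
\]
and substituting this into the Schur complement formula gives
\[
\det(M) = \det(P)\,\det\!\left(Q - R^\top P^{-1} R\right) = \frac{\det(M|_S)}{\det\!\left(M^{-1}|_{S^\perp}\right)},
\]
which is the claim. The only step demanding genuine care — and the main potential obstacle — is the identification of the coordinate-free restrictions $M|_S = P_S M P_S$ and $M^{-1}|_{S^\perp}$ with the corresponding diagonal blocks in the chosen basis. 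This is where the orthonormality and adaptedness of the basis to the splitting $\R^N = S \oplus S^\perp$ is essential: it guarantees that $P_S M P_S$ restricted to $S$ is literally the upper-left block $P$, and likewise that $M^{-1}$ restricted to $S^\perp$ is the lower-right block of $M^{-1}$. Once these identifications are verified, the determinant computation is purely formal, and the resulting identity is manifestly independent of the particular orthonormal basis chosen.
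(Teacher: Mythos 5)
Your proof is correct, and it is exactly the argument the paper has in mind: the paper does not prove Lemma~\ref{lem:HJ} at all, remarking only that it is the determinantal analogue of Haynsworth's theorem, provable ``in a similar fashion,'' and citing Horn--Johnson --- and the Schur-complement factorization $\det(M)=\det(P)\det\bigl(Q-R^\top P^{-1}R\bigr)$ combined with the block-inverse identity is precisely that standard argument. Your identification of $M|_S$ and $M^{-1}|_{S^\perp}$ with the diagonal blocks in an orthonormal basis adapted to $S\oplus S^\perp$ is the right way to make the coordinate-free statement rigorous, and the invertibility of the Schur complement (needed for the block-inverse step) follows immediately from the nonsingularity of $M$ and $P$ via the factorization itself.
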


This is more or less the determinantal version of
Lemma~\ref{lem:haynsworth} and could be proved in a similar fashion.
It can also be found explicitly in~\cite{Horn.Johnson.book}.

\begin{lem}[Sylvester's Law of Inertia]\label{lem:sylvester}
  Suppose $M$ is Hermitian and $U$ is non-singular. Then
\begin{equation}
  n_-(M) = n_-(U^\top M U).
\end{equation}
In particular, if $A$ is an $n\times n$ matrix, $S\subset \R^n$ is a
$k$-dimensional subspace, $P_S$ is the orthogonal projection onto $S$,
$x_1,\dots, x_k$ is a basis for $S$, and $X = (x_1,\dots, x_k)$ is the
$k\times k$ matrix whose $i$th column is $x_i$, then
\begin{equation*}
  n_+(P_SAP_S) = n_+(X^\top A X).
\end{equation*}

\end{lem}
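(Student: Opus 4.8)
The plan is to prove the congruence-invariance statement $n_-(M) = n_-(U^\top M U)$ first, and then obtain the projection identity as a corollary. For the first part I would avoid diagonalizing the congruence directly and instead use the variational characterization of $n_-$: for a Hermitian $n\times n$ matrix $M$, the integer $n_-(M)$ equals the largest dimension of a subspace $W\subseteq\R^n$ on which the quadratic form $w\mapsto\ip{w}{Mw}$ is negative definite. This characterization is the natural engine because it manifestly transports under congruence.

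First I would establish the characterization. Writing $M=\sum_i\lambda_i v_iv_i^\top$ in an orthonormal eigenbasis, the span $V_-$ of the eigenvectors with $\lambda_i<0$ is a negative-definite subspace of dimension $n_-(M)$, so the maximal dimension is at least $n_-(M)$. For the reverse, let $V_{\ge 0}$ be the span of the eigenvectors with $\lambda_i\ge 0$, so $\dim V_{\ge 0}=n-n_-(M)$; if some negative-definite subspace $W$ had $\dim W=d>n_-(M)$, then $\dim W+\dim V_{\ge 0}>n$ would force a nonzero vector in $W\cap V_{\ge 0}$ on which the form is simultaneously negative and nonnegative, a contradiction. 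Hence the maximal dimension is exactly $n_-(M)$. Congruence-invariance is then immediate: if $w\mapsto\ip{w}{Mw}$ is negative definite on $W$, the substitution $w=Uy$ shows $y\mapsto\ip{y}{U^\top M U\,y}$ is negative definite on the equidimensional subspace $U^{-1}W$, giving $n_-(U^\top M U)\ge n_-(M)$; applying the same argument with $U^{-1}$ yields the reverse inequality. Replacing $M$ by $-M$ shows $n_+$ is congruence-invariant as well.

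For the ``in particular'' claim I would pass through an orthonormal basis of $S$. Let $E$ be the $n\times k$ matrix whose columns form an orthonormal basis of $S$, so $E^\top E=I_k$ and $P_S=EE^\top$. Because $P_SAP_S$ is symmetric, annihilates $S^\perp$, and maps $S$ into itself, in a basis adapted to $\R^n=S\oplus S^\perp$ it is block-diagonal with blocks $E^\top A E$ and $0$; its spectrum is therefore that of the compression $E^\top A E$ together with $n-k$ zeros, whence $n_+(P_SAP_S)=n_+(E^\top A E)$. Finally, since the columns of both $X$ and $E$ span $S$, there is an invertible $k\times k$ matrix $R$ with $X=ER$, so $X^\top A X=R^\top(E^\top A E)R$. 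Applying the congruence-invariance of $n_+$ gives $n_+(X^\top A X)=n_+(E^\top A E)=n_+(P_SAP_S)$, as desired.

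The step I expect to demand the most care is the variational characterization, and specifically the dimension-counting argument that no negative-definite subspace can exceed dimension $n_-(M)$; the analogous bookkeeping in the second part is verifying that the compression $E^\top A E$ captures exactly the nonzero spectrum of the ambient operator $P_SAP_S$, so that the $S^\perp$ directions contribute only zeros and introduce no spurious positive eigenvalues. Both points are routine, but they are where the argument must be pinned down precisely.
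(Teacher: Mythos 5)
Your proposal is correct, and its core argument for the ``in particular'' claim is the same as the paper's: pick an orthonormal basis of $S$ (your $E$, the paper's $Y$), identify $n_+(P_SAP_S)$ with $n_+(E^\top A E)$, and then relate $E^\top A E$ to $X^\top A X$ by an invertible $k\times k$ change of basis ($X = ER$ in your notation, $Z = Y^{-1}X$ in the paper's) so that congruence invariance finishes the job. Where you differ is in scope and in care. The paper treats the first statement as off-the-shelf (``just the standard formulation of Sylvester's Theorem'') and proves only the second part; you instead give a self-contained proof of congruence invariance via the variational characterization of $n_-$ as the maximal dimension of a negative-definite subspace, which is a clean and fully rigorous way to supply the ingredient the paper merely cites. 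You are also more careful on the bookkeeping: the paper's identities $P_SAP_S = Y^\top A Y$ and $Z = Y^{-1}X$ are notational abuses (a $k\times k$ compression equated with an $n\times n$ operator, and an ``inverse'' of an $n\times k$ matrix), whereas your formulation --- the spectrum of $P_SAP_S$ is that of $E^\top A E$ together with $n-k$ zeros, and $X = ER$ with $R$ invertible --- says precisely what is meant. The only implicit hypothesis you share with the paper is that $A$ must be symmetric (Hermitian) for signatures to make sense; neither proof needs anything beyond that.
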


\begin{proof}
  The first statement is just the standard formulation of Sylvester's
  Theorem, so we will prove the second part.  Notice that if we choose
  $y^1,\dots, y^k$ to be an orthonormal basis for $S$, then $P_SAP_S =
  Y^\top A Y$.  Choosing $Z = Y^{-1}X$, we have
\begin{equation*}
  Z^\top (Y^\top A Y) Z = X^\top A X,
\end{equation*}
and by the first statement this implies that these two matrices have
the same signature.
\end{proof}


\end{document}